\documentclass[12pt]{amsart}

\usepackage{CJKutf8}
\usepackage{pgf}
\usepackage{graphicx}
\usepackage{amsmath, amsthm, amssymb, amscd}

\newcommand{\eqd}[1]{\approx_{\delta_{#1}}}

\newcommand{\eqe}{\approx_{\epsilon}}

\newcommand\Z{{\mathbb Z}}
\newcommand\N{{\mathbb N}}
\newcommand\R{{\mathbb R}}
\newcommand\C{{\mathbb C}}

\newcommand\Q{{\mathbb Q}}

\newcommand\bp{\begin{proof}}
\newcommand\ep{\end{proof}}

\newcommand\bprop{\begin{prop}}
\newcommand\eprop{\end{prop}}

\newcommand\bt{\begin{thm}}
\newcommand\et{\end{thm}}

\newcommand\bc{\begin{cor}}
\newcommand\ec{\end{cor}}

\newcommand\ba{\begin{aligned}}
\newcommand\ea{\end{aligned}}

\newcommand\bl{\begin{lem}}
\newcommand\el{\end{lem}}

\newcommand\bi{\begin{itemize}}
\newcommand\ei{\end{itemize}}

\newcommand\br{\begin{rem}}
\newcommand\er{\end{rem}}

\newcommand\bd{\begin{defn}}
\newcommand\ed{\end{defn}}

\newcommand\Nn{{\mathcal N}}

\newcommand\Qq{{\mathcal Q}}
\newcommand\Zz{{\mathcal Z}}

\DeclareMathOperator{\End}{End}
\DeclareMathOperator{\Aut}{Aut}

\DeclareMathOperator{\Ad}{Ad}
\DeclareMathOperator{\diag}{diag}

\DeclareMathOperator{\id}{id}

\newtheorem{thm}{Theorem}[section]

\newtheorem*{thm*}{Theorem}

\newtheorem{prop}[thm]{Proposition}
\newtheorem{lem}[thm]{Lemma}
\newtheorem{cor}[thm]{Corollary}

\newtheorem*{q*}{Question}

\theoremstyle{remark}
\newtheorem{rem}[thm]{Remark}
\newtheorem*{rem*}{Remark}

\theoremstyle{definition}
\newtheorem{defn}[thm]{Definition}

\newtheorem{eg}[thm]{Example}
\address{\tiny Department of Mathematics, University of Oregon, Eugene, OR 97403-1222, USA}
\email{msun@uoregon.edu}
\author{Michael Sun}
\title[Strongly outer product type actions]{Strongly outer product type actions}
\begin{document}
\begin{abstract}We show that for any countable discrete maximally almost periodic group $G$ and any UHF algebra $A$, there exists a strongly outer product type action $\alpha$ of $G$ on $A$. When $G$ is also elementary amenable, all such crossed products $A\rtimes_{\alpha}G$ are simple nuclear tracially approximately finite dimensional (TAF) with a unique tracial state and satisfy the Universal Coefficient Theorem (UCT). We also show the existence of countable discrete almost abelian group actions on the universal UHF algebra with a certain Rokhlin property.
\end{abstract}
\maketitle
\section*{Introduction}
 One way to define a UHF algebra is to start with a sequence $(n_l)_{l\in\N}$ of integers greater than 1 and then associate to it the $C^*$-algebra $M_{(n_l)_{l\in\N}}$ using an infinite tensor product. That is
$$M_{(n_l)_{l\in\N}}=M_{n_1}\otimes M_{n_2}\otimes\dots \otimes M_{n_l}\otimes\dots.$$

 
We will study actions of discrete groups $G$ on UHF algebras as well as study their the crossed product $C^*$-algebras $M_{(n_l)_{l\in\N}}\rtimes G$. We are particularly interested in finding examples of group actions whose crossed products retain certain key properties of $M_{(n_l)_{l\in\N}}$ such as having a unique tracial state.
Given our definition of UHF algebras, a natural starting point is to look at those group actions that preserve some tensor product decomposition. Furthermore we will for convenience look at those actions that are \emph{inner} on each factor. So that for each $l\in\N$, we have a group homomorphism
$$G\to U(M_{n_l})\to \Aut M_{n_l}.$$
Putting this together we get whats called a \emph{product type action} \cite{HR}:
$$G\to \prod_{l=1}^{\infty}U(M_{n_l})\to\Aut M_{(n_l)_{l\in\N}}.$$
To ensure that the crossed product is separable, we require our group $G$ to be countable. To guarantee that $M_{(n_l)_{l\in\N}}\rtimes G$ is simple, Kishimoto \cite{Kish1} taught us that it suffices to require $\alpha_g$ is not an inner automorphism for all $g\neq1$. In particular such a requirement would mean we have an embedding of our group $G$
$$G\hookrightarrow \prod_{l=1}^{\infty}U(M_{n_l})$$
making it \emph{maximally almost periodic}. We will require even more of our group actions, in that we want the automorphisms to remain outer even after extending them to the weak closure of $M_{(n_l)_{l\in\N}}$ in its trace representation. Such a requirement is called \emph{strongly outer} and Kishimoto \cite{kish3} has shown that this condition is essentially equivalent to $M_{(n_l)_{l\in\N}}\rtimes G$ having a unique tracial state. So we ask:
\begin{q*} For any countable discrete maximally almost periodic group $G$ and any given UHF algebra $A$, does there always exist a strongly outer product type action $\alpha$ of $G$ on $A$? What can be said about $A\rtimes_{\alpha}G$?
\end{q*}
We answer the first part in Section 3 as Theorem \ref{tRokA}:
\begin{thm*} If $G$ is a countable discrete maximally almost periodic group and $A$ is any UHF algebra, then there is a strongly outer product type action of $G$ on $A$. 
\end{thm*}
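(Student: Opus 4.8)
The plan is to realize $A$ as a tensor product $\bigotimes_{j=1}^{\infty}M_{N_j}$ with the block sizes $N_j$ growing, and to install on the $j$-th block an amplification of a finite-dimensional unitary representation of $G$ that keeps the first $j$ elements of $G\setminus\{1\}$ far away from the scalar matrices. Since $A$ is UHF it has a unique trace $\tau$, preserved by every automorphism, so an action on $A$ is strongly outer iff each $\alpha_g$ with $g\ne 1$ is outer on $\pi_\tau(A)''$, the hyperfinite $\mathrm{II}_1$ factor $R$. For a product type action $\bigotimes_l\Ad\rho_l$ I will use the criterion of Section 2 (ultimately Kishimoto's): $\alpha_g$ is outer on $R$ iff $\sum_l\bigl(1-|\tau_{n_l}(\rho_l(g))|\bigr)=\infty$, where $\tau_n$ is the normalized trace on $M_n$. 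The point is that a unitary $V\in R$ implementing $\alpha_g$ would satisfy $E_N(V)=\overline{\tau(V^*W_N)}\,W_N$, where $W_N=\bigotimes_{l\le N}\rho_l(g)$ and $E_N$ is the trace-preserving conditional expectation onto the first $N$ tensor factors; hence such a $V$ exists iff the $W_N$ converge up to scalars, iff $\prod_{M<l\le N}|\tau_{n_l}(\rho_l(g))|\to 1$ as $M,N\to\infty$, iff the series converges. So it suffices to make every $g\ne 1$ contribute a divergent series.

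Two elementary representation-theoretic facts provide the ingredients. First, every $g\in G\setminus\{1\}$ admits a finite-dimensional unitary representation $\sigma_g$ with $\sigma_g(g)$ non-scalar: since $G$ is maximally almost periodic, pick $\sigma_0$ with $\sigma_0(g)\ne 1$, and if $\sigma_0(g)$ happens to be a nontrivial scalar replace $\sigma_0$ by $\sigma_0\oplus\mathbf 1$. Hence, for a finite $F\subseteq G\setminus\{1\}$, the representation $\bigoplus_{g\in F}\sigma_g$ sends every $g\in F$ to a non-scalar, so its normalized trace at each such $g$ has modulus strictly below $1$; taking a large enough tensor power $\pi_F$ of it (normalized traces multiply over $\otimes$) yields a representation of some dimension $d_F$ with $|\tau_{d_F}(\pi_F(g))|\le\tfrac14$ for all $g\in F$. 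Second, amplification: given such $\pi_F$ and any integer $N\ge 2d_F$, write $N=qd_F+r$ with $q\ge 2$ and $0\le r<d_F$, and put $\rho=\pi_F^{\oplus q}\oplus\mathbf 1_r$; then $\dim\rho=N$ and, since $r<d_F$, $|\tau_N(\rho(g))|\le\frac{qd_F/4+r}{qd_F+r}\le\frac{q+4}{4(q+1)}\le\tfrac12$ for all $g\in F$. Thus on a full matrix algebra of any prescribed size $\ge 2d_F$ one can install a representation that keeps every element of $F$ at normalized-trace-modulus at most $\tfrac12$.

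For the construction, write $A=\bigotimes_k M_{m_k}$; since $\prod_k m_k$ is infinite we may regroup consecutive factors so that $A\cong\bigotimes_{j}M_{N_j}$ with $N_j\to\infty$. Enumerate $G\setminus\{1\}=\{g_1,g_2,\dots\}$, and using $N_j\to\infty$ choose a nondecreasing sequence $n_j\to\infty$ such that $2d_{F_j}\le N_j$, where $F_j=\{g_1,\dots,g_{n_j}\}$ (for instance $n_j=\max\{n\ge 0:2d_{\{g_1,\dots,g_n\}}\le N_j\}$). Let $\rho^{(j)}\colon G\to U(M_{N_j})$ be the representation furnished by the amplification step applied to $F_j$, and set $\alpha=\bigotimes_j\Ad\rho^{(j)}$; via $\bigotimes_j M_{N_j}\cong A$ this is a product type action of $G$ on $A$. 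Given $g=g_i\ne 1$, for every sufficiently large $j$ we have $g_i\in F_j$, hence $1-|\tau_{N_j}(\rho^{(j)}(g_i))|\ge\tfrac12$, so $\sum_j\bigl(1-|\tau_{N_j}(\rho^{(j)}(g_i))|\bigr)=\infty$; by the criterion $\alpha_{g_i}$ is outer on $R$, and therefore $\alpha$ is strongly outer.

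The crux — and the reason the statement survives for an arbitrary UHF algebra — is the amplification step together with the delayed bookkeeping of the construction: one cannot in general match the representation dimensions to the supernatural number of $A$ (for example when $A$ has all finite prime exponents), so instead one fills each block $M_{N_j}$ almost entirely with copies of a fixed representation, settling for $|\tau_{N_j}(\rho^{(j)}(g))|\le\tfrac12$ rather than pushing it to $0$, and one lets the finite sets $F_j$ grow slowly enough that each individual $g$ eventually, and then permanently, contributes at least $\tfrac12$ to its divergent series. The remaining points — proving the outerness criterion in the stated form, and observing that a product type action with respect to a regrouping of $A$ is still a product type action on $A$ — are routine.
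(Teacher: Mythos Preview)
Your argument is correct and follows a genuinely different route from the paper's.

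The paper proceeds in three stages: first it builds a strongly outer product type action on $\Qq$ by taking any outer product type action (furnished by maximal almost periodicity) and passing to its infinite tensor power $\alpha^{\otimes\N}$, which is strongly outer by the Matui--Sato argument (Lemma~\ref{ot}/\ref{socor}); second, it invokes Kishimoto's equivalence (Theorem~\ref{sotrp}) to upgrade strong outerness of each $\alpha_g$ to the tracial Rokhlin property; third, it transfers this to an arbitrary UHF $A$ via the ``bump-up'' Lemma~\ref{bumpup}, which amplifies each block $M_{S_l}$ into a prescribed $M_{N_l}$ by $U_l\mapsto\diag(U_l\otimes 1_{Q_l},1_{r_l})$ and checks that Rokhlin towers survive. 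The final action is assembled one group element at a time (Lemma~\ref{oneg}) over a partition of the tensor factors indexed by $G$.

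You bypass both the detour through $\Qq$ and the tracial Rokhlin machinery, working instead with the classical innerness criterion for product type automorphisms on $R$, namely that $\bigotimes_l\Ad u_l$ is outer on $\pi_\tau(A)''$ iff $\sum_l(1-|\tau_{n_l}(u_l)|)=\infty$, whose proof you sketch correctly via conditional expectations. Your amplification step is structurally the same device as the paper's bump-up --- fill $M_{N_j}$ with $q$ copies of a fixed representation plus a small trivial remainder --- but you measure its effect by a normalized-trace bound rather than by Rokhlin towers, and you handle all of $G$ at once through the growing finite sets $F_j$ rather than one $g$ at a time.

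What each approach buys: yours is shorter and essentially self-contained, needing only finite-dimensional representation theory and the trace criterion. The paper's route is longer but produces the tracial Rokhlin property explicitly along the way, which is what feeds into the later crossed-product results (Theorem~\ref{cptrz}) and the Rokhlin constructions of Section~5; it also illustrates the bump-up/cut-down philosophy that the paper wants to advertise. The common core is the amplification-with-remainder idea; the difference is purely in the invariant used to certify outerness.
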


Some examples for abelian groups are given in Section 4.

For the second part of the question about $A\rtimes_{\alpha}G$, we examine recent work of Matui and Sato \cite{MS2} \cite{MS3}, from which much of this work was inspired. One of the things that they showed is that for many amenable groups, including all \emph{elementary amenable} groups, and for a large class of $C^*$-algebras, including the UHF algebras, that strongly outer actions have some sort of \emph{tracial Rokhlin property}. This can then be used to show that the crossed product is \emph{Jiang-Su absorbing}, which in our case implies $A\rtimes_{\alpha}G$ is \emph{tracially approximately finite dimensional} or \emph{tracial rank zero} in the sense of Lin \cite{LinTAF}, which is a concise characterization of a large class of $C^*$-algebras satisfying Elliott's classification conjecture. We show in Section 6:

\begin{thm*}\label{cbtrz}Suppose $G$ is a countable discrete maximally almost periodic elementary amenable group, $A$ is any UHF algebra and $\alpha$ is a strongly outer product type action of $G$ on $A$. Then $A\rtimes_{\alpha}G$ is unital simple separable nuclear with tracial rank zero, satisfies the Universal Coefficient Theorem and has a unique tracial state. 
\end{thm*}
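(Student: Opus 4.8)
The proof assembles several known results, the construction having been arranged so that the crossed product meets their hypotheses. Several properties are immediate: $A\rtimes_{\alpha}G$ is unital because $A$ is and $G$ is discrete, separable because $A$ is and $G$ is countable, and nuclear because $A$ is nuclear and $G$ is amenable (amenability also identifies the full and reduced crossed products). For simplicity, observe that a strongly outer automorphism is in particular outer on $A$ itself --- an inner automorphism of $A$ would remain inner on the weak closure --- so $\alpha_g$ is outer for every $g\neq 1$, and Kishimoto's theorem \cite{Kish1}, applied to the simple algebra $A$, gives that $A\rtimes_{\alpha}G$ is simple. For the unique tracial state, any tracial state on $A\rtimes_{\alpha}G$ restricts to an $\alpha$-invariant trace on $A$, necessarily the unique trace $\tau$ of the UHF algebra, whose GNS representation identifies $A''$ with the hyperfinite $\mathrm{II}_1$ factor $R$; since $\alpha$ is strongly outer the induced action of $G$ on $R$ is properly outer, so $R\rtimes_{\alpha}G$ is a $\mathrm{II}_1$ factor and hence monotracial, and since the GNS representation of any tracial state on $A\rtimes_{\alpha}G$ generates a copy of $R\rtimes_{\alpha}G$, the tracial state is unique. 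This last point is essentially \cite{kish3}.

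For the UCT I would use the product-type structure. Writing $A=\varinjlim_k B_k$ with $B_k=M_{n_1}\otimes\cdots\otimes M_{n_k}$, the action $\alpha$ restricts to an inner action of $G$ on each $B_k$, implemented by a genuine unitary representation, with $G$-equivariant connecting embeddings; hence $A\rtimes_{\alpha}G\cong\varinjlim_k\bigl(B_k\rtimes_{\alpha}G\bigr)$ and each $B_k\rtimes_{\alpha}G\cong B_k\otimes C^{*}(G)$, because the restricted action is unitarily implemented and $G$ is amenable. Consequently $A\rtimes_{\alpha}G$ lies in the bootstrap class as soon as $C^{*}(G)$ does. The latter holds for every amenable --- in particular every elementary amenable --- group $G$, since such $G$ has the Haagerup property and the work of Higson--Kasparov places $C^{*}(G)$ in the bootstrap class; thus $A\rtimes_{\alpha}G$ satisfies the UCT.

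For tracial rank zero I would invoke the work of Matui and Sato \cite{MS2,MS3}. Every UHF algebra is a unital simple separable nuclear $\Zz$-stable $C^{*}$-algebra, $\alpha$ is strongly outer, and $G$ is elementary amenable, so their theorem gives that $A\rtimes_{\alpha}G$ is $\Zz$-stable. It then remains to upgrade $\Zz$-stability to tracial rank zero, and here being UHF (not merely $\Zz$-stable) is genuinely used: the unital inclusion $A\hookrightarrow A\rtimes_{\alpha}G$ sends $K_0(A)$ into $K_0(A\rtimes_{\alpha}G)$ compatibly with $\tau_{*}$, and $\tau_{*}(K_0(A))$ is the value group of the UHF algebra, a dense subgroup of $\R$; hence the image of $K_0(A\rtimes_{\alpha}G)$ in $\operatorname{Aff}(T(A\rtimes_{\alpha}G))\cong\R$ is dense. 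A unital simple separable $\Zz$-stable $C^{*}$-algebra with dense image of $K_0$ has real rank zero (R\o rdam's analysis of $\Zz$-absorbing algebras), and a unital simple separable nuclear $\Zz$-stable $C^{*}$-algebra of real rank zero satisfying the UCT has tracial rank zero by Lin's classification. This yields the remaining assertion.

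The bookkeeping for unitality, separability, nuclearity and simplicity is routine; the substance is borrowed. I expect the two delicate points to be: confirming that the notion of strongly outer in force here is exactly the one required by \cite{MS2,MS3} (the monotracial UHF case being what matters), so that their $\Zz$-stability theorem applies; and citing the chain $\Zz$-stable $\Rightarrow$ real rank zero $\Rightarrow$ tracial rank zero with the correct hypotheses --- this is the one place where the full strength of ``$A$ is UHF'', via the density of the image of $K_0$, is needed rather than mere $\Zz$-stability, without which the crossed product would be classifiable but not necessarily TAF.
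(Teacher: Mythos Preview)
Your argument tracks the paper's almost exactly through simplicity, nuclearity, the unique trace, the UCT via the direct-limit presentation $A\rtimes_{\alpha}G\cong\varinjlim B_k\otimes C^*(G)$, $\Zz$-stability via Matui--Sato, and real rank zero via R{\o}rdam together with the density of $\tau_*(K_0(A))$. The divergence is only in the very last step, and there you have a genuine gap.

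The claim ``a unital simple separable nuclear $\Zz$-stable $C^*$-algebra of real rank zero satisfying the UCT has tracial rank zero by Lin's classification'' is not a theorem. Lin's classification is a \emph{classification} of TR0 algebras by their Elliott invariant; it does not by itself manufacture tracial rank zero from $\Zz$-stability, real rank zero and the UCT. At the time of the cited literature the passage from those hypotheses to TR0 still required a regularity input such as (locally) finite decomposition rank. This is exactly what the paper supplies: it observes that the same direct-limit presentation you used for the UCT, namely $A\rtimes_{\alpha}G\cong\varinjlim M_{N_k}\otimes C^*(G)$, also shows $A\rtimes_{\alpha}G$ is quasidiagonal (because $C^*(G)$ is quasidiagonal for amenable maximally almost periodic $G$); then \cite[Corollary~1.2]{MS3} converts quasidiagonality, together with the properties already established, into finite decomposition rank; and finally Winter's theorem \cite[Theorem~2.1]{Wintr0} gives tracial rank zero from finite decomposition rank and real rank zero. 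So the missing idea is precisely to extract \emph{quasidiagonality} from the product-type structure --- something you implicitly have in hand from your UCT argument --- and to route through decomposition rank rather than invoking Lin's classification. Once you insert that step, your proof and the paper's coincide.
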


As an application of the existence theorem, we examine the existence problem for product type actions with the Rokhlin property in Section 5. Of course, we cannot hope for existence in general since we are already hindered by finite group actions requiring matrix sizes to be divisible by the order of the group. Nonetheless, the existence can be examined on the universal UHF algebra $\Qq$ where such divisibility constraints are satisfied and we relate it to our existence theorem using a simple procedure we call ``cutting-down''. In the absence of a general definition of the Rokhlin property we use a temporary substitute in our special case and prove the following, which appears as Theorem \ref{RokQ}:
\begin{thm*} If $G$ is a countable discrete almost abelian group, then there is a product type action of $G$ on $\Qq$ with the ``Rokhlin property''.
\end{thm*}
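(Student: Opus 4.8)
The plan is to build the action by hand from the left regular representations of the finite quotients of $G$. Since $G$ is almost abelian it is in particular countable, discrete, residually finite, and maximally almost periodic; fix a decreasing sequence $G=N_0\supseteq N_1\supseteq N_2\supseteq\cdots$ of finite-index normal subgroups with $\bigcap_k N_k=\{1\}$. Put $d_k=[G:N_k]$ and let $\lambda_k\colon G\to U(M_{d_k})$ be the composite of $G\to G/N_k$ with the left regular representation of the finite group $G/N_k$. After identifying $M_{d_k}$ with the matrix algebra whose rows and columns are indexed by $G/N_k$, the diagonal matrix units $e^{(k)}_{xN_k}$ ($xN_k\in G/N_k$) form a partition of unity which $\Ad(\lambda_k(g))$ permutes by $e^{(k)}_{xN_k}\mapsto e^{(k)}_{gxN_k}$. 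These are exactly the ``Rokhlin towers'' we want, one attached to each finite-index subgroup $N_k$.

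First I would form $B=\bigotimes_{k\ge1}M_{d_k}$ with the product type action $\alpha_0=\bigotimes_k\Ad(\lambda_k)$. Its kernel is $\bigcap_k\ker(\lambda_k)=\bigcap_k N_k=\{1\}$, so $\alpha_0$ is a faithful product type action, and since for each $g\neq1$ the unitary $\lambda_k(g)$ is a fixed-point-free permutation matrix (of normalized trace $0$) for all sufficiently large $k$, $\alpha_0$ is strongly outer. The catch is that $B$ need not be $\Qq$, since the $d_k$ may involve only finitely many primes. This is where the ``cutting-down'' procedure together with the existence theorem enters: writing $\Qq\cong B\otimes\Qq$, apply Theorem \ref{tRokA} to obtain a strongly outer product type action $\beta$ of $G$ on $\Qq$ and set $\alpha=\alpha_0\otimes\beta$ on $B\otimes\Qq\cong\Qq$. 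A tensor product of product type actions is again product type, so $\alpha$ is a product type action of $G$ on the universal UHF algebra.

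To verify the ``Rokhlin property'', fix a finite $F\subseteq G$ and $\epsilon>0$. Since any element of $\Qq$ is approximated within $\epsilon$ by one from a finite subtensor product, choose $k$ large enough that the $k$-th factor $M_{d_k}\subseteq B\subseteq\Qq$ lies beyond that finite stage; then the projections $e^{(k)}_{xN_k}$ inside that factor are approximately central, sum to $1$, and are permuted by $\alpha_g$ exactly according to left translation by $g$ on $G/N_k$ for all $g\in F$. Because $\bigcap_k N_k=\{1\}$ the subgroups $N_k$ shrink to the trivial group, so we obtain towers of this kind at every scale, which is precisely the substitute for the Rokhlin property used here; in particular $\alpha$ is faithful and strongly outer, either directly from $\beta$ or because a Rokhlin action is automatically strongly outer.

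The step I expect to require the most care is the bookkeeping in the ``cutting-down'' step: arranging that the ambient algebra is exactly the universal UHF algebra $\Qq$ while keeping the action product type and not disturbing the towers coming from $\alpha_0$, and matching the concrete definition of the ``Rokhlin property'' used in this paper against the data $\{e^{(k)}_{xN_k}\}$, including the equivariance bookkeeping over the coset spaces $G/N_k$. By contrast, residual finiteness of almost abelian groups and the strong outerness of $\alpha_0$ are routine by comparison.
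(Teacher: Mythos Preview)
There is a genuine gap: almost abelian groups need not be residually finite. The Pr\"ufer group $\Z(p^{\infty})$ and $\Q/\Z$ are abelian but have no nontrivial finite quotients, so your chain $N_0\supseteq N_1\supseteq\cdots$ is forced to be eventually constant and $\alpha_0$ is trivial. More generally, any torsion element $g$ that fails to survive in every finite quotient of $G$ receives no Rokhlin tower from your $\alpha_0$, and tensoring with $\beta$ does not repair this: strong outerness of $\beta_g$ only gives the \emph{tracial} order $k(g)$ Rokhlin property, whereas the paper's definition demands projections summing exactly to $1$. Upgrading from tracial to strict for finite-order automorphisms is exactly what the paper's ``cut-down'' Lemma~\ref{cutdown} and Lemma~\ref{onefg} accomplish, and it is not free.

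There is also a mismatch between what you verify and what the paper asks for. The paper's ``Rokhlin property'' is \emph{pointwise}: for each $g$ of order $k(g)$, the single automorphism $\alpha_g$ must have the order $k(g)$ Rokhlin property of Section~2. You instead produce towers indexed by $G/N_k$. When $N_k$ is normal every $\langle g\rangle$-orbit on $G/N_k$ has the same length, so one can regroup your projections to recover the pointwise property---but you should say so, and this only handles those $g$ that eventually leave $N_k$. By contrast, the paper first reduces to abelian $G$ via Theorem~\ref{index} (induce from the abelian finite-index subgroup and tensor with a finite-group Rokhlin action on the quotient), then for each $g$ separately builds an action on $M_{k(g)^{\infty}}$ (finite order, via cut-down) or on $\Qq$ (infinite order, via Kishimoto) with the correct single-automorphism Rokhlin property at $g$, and tensors these over all $g\in G$.
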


When $G$ is abelian, the extra strength of this property will be enough to determine $A\rtimes_{\alpha}G$ up to isomorphism independently of $\alpha$. This will appear in another article.


\subsection*{Acknoledgements} I would like to thank H. Lin, N. C. Phillips and Y. Sato. This is based on part of the author's thesis \cite[Ch III-V]{thesis}.

\section{Preliminaries}
\subsection*{Notation} 
We write $a\approx_{\epsilon}b$ to stand for $\|a-b\|<\epsilon$.

\subsection{Discrete groups}
 A discrete group $G$ is said to be \emph{maximally almost periodic} if for every $g\in G\setminus\{1\}$ there exists $n\in\N$ and a group homomorphism $\varphi_n: G\to U(M_n)$ such that $\varphi_n(g)\neq1$. 

\bl\label{map} If $G$ is a countable discrete maximally almost periodic group, then there is an embedding
$$G\hookrightarrow \prod_{n=1}^{\infty}U(M_n)$$
such that the image of $G$ intersects $\prod_{n=1}^{\infty} \C1_n$ trivially.
\el
\bp
For each $g\in G\setminus\{1\}$, let $\varphi[g]:G\to U(M_{n(g)})$ be non-trivial. Define a map $\psi[g]:G\to U(M_{n(g)+1})$ by $\psi[g]=\diag(1,\varphi[g])$
for all $g\in G\setminus\{1\}$. Now $\prod_g\psi[g]$ leads to the desired embedding. \ep

A discrete group is said to be \emph{almost abelian} if it has an abelian subgroup of finite index.

\begin{lem}\label{almostn} Suppose that $G$ is a discrete group with an abelian subgroup $H$ of finite index. Then the subgroup 
$$N=\bigcap_{g\in G}gHg^{-1}$$
is normal and abelian with finite index. In particular, every almost abelian group is elementary amenable.
\end{lem}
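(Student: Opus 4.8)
The plan is to verify the three asserted properties of $N$ in turn and then read off elementary amenability from the standard closure properties of that class; note that $N$ is nothing but the \emph{normal core} of $H$, and the argument is the classical one.

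For \emph{normality}, I would observe that for any $k\in G$ conjugation by $k$ sends each $gHg^{-1}$ to $(kg)H(kg)^{-1}$, so it merely permutes the family $\{gHg^{-1}:g\in G\}$ and hence fixes the intersection; thus $kNk^{-1}=N$ for every $k$. For \emph{abelianness}, taking $g=1$ gives $N\subseteq H$, and subgroups of abelian groups are abelian. For \emph{finite index}, the cleanest route is to identify $N$ with the kernel of the left-translation action of $G$ on the finite coset space $G/H$, i.e.\ of the homomorphism $G\to\Sym(G/H)$; since the target is finite, $[G:N]$ is finite (indeed it divides $[G:H]!$). Equivalently, one can note that $gHg^{-1}$ depends only on the coset $gH$, so $H$ has only finitely many conjugates and $N$ is a finite intersection of finite-index subgroups, hence itself of finite index by Poincar\'e's theorem.

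For the final sentence I would recall that the elementary amenable groups form the smallest class of groups containing all finite groups and all abelian groups that is closed under subgroups, quotients, extensions, and directed unions. Since $N$ is abelian it is elementary amenable, and since $G/N$ is finite it is elementary amenable as well; as $G$ fits in the extension $1\to N\to G\to G/N\to1$, closure under extensions shows that $G$ is elementary amenable. Applying this with $H$ taken to be any abelian subgroup of finite index gives the assertion for an arbitrary almost abelian group.

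I do not expect any genuine obstacle here: the only step that is more than a one-liner is the finiteness of $[G:N]$, and the ``kernel of the coset action'' description disposes of it immediately, so the write-up should take only a few lines.
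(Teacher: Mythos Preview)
Your argument is correct and essentially the same as the paper's: both show $N\subseteq H$ is abelian, $N$ is normal by construction, and that $N$ equals the kernel of the $G$-action on $G/H$, so $G/N$ embeds in the finite symmetric group $S_{[G:H]}$. The elementary amenability conclusion then follows from the extension $1\to N\to G\to G/N\to 1$, exactly as you say (the paper leaves this last step implicit).
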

\begin{proof} The subgroup $N$ is abelian because it is a subgroup of $H$. It is a normal subgroup of $G$ by construction. Now $N$ is exactly the kernel of the action of $G$ on $G/H$, which is a finite set and hence $G/N$ embeds into the finite symmetric group $S_{[G:H]}$.
\end{proof}

\bl \label{induction}Suppose $G$ is a discrete group, $H$ is a subgroup of $G$ with finite index $k$ and let
$$N=\bigcap_{g\in G}gHg^{-1}.$$
If $\rho_H$ is a unitary representation of $H$ on $\C^n$ and $\rho_H^G$ its induced representation of $G$ on $\C^{nk}$, then $\rho_H^G|_N=\rho_H|_N\otimes\id_{M_k}$.
\el
\bp
Let $\{g_1,\dots, g_k\}$ be a set of coset representatives for $H$ and recall the induced representation can be written as
$$\C^{nk}=g_1\C^n\oplus g_2\C^n\oplus\dots\oplus g_k\C^n,$$
with the obvious $G$ action. Since $N$ stabilizes each of the cosets $g_iH$, it preserves this decomposition. On each factor the representations are conjugate.
\ep
\bprop \label{everyab}Every countable discrete abelian group is isomorphic to a subgroup of $\bigoplus_{n=1}^{\infty}(\Q\oplus\Q/\Z)$.
\eprop
\bp
We sketch a proof. First every abelian group is a subgroup of a divisible group. Every divisible group is a subgroup of a direct sum of copies of $\Q$ and $\Q/\Z$. Countable groups will only need countably many summands. 
\ep

\subsection{Product type actions on UHF algebras}
Let $\Qq$ be the universal UHF algebra conventionally given as the $C^*$-direct limit
$$\varinjlim \left(M_{n!},\phi_n: a\mapsto\diag(a,\dots,a)\right).$$
To represent this as an infinite tensor product first write
$$M_{n!}=M_1\otimes\dots\otimes M_n\quad\text{and}\quad\phi_n=\id_{n!}\otimes1_{n+1}$$
so that this limit gives meaning to the expression for $\Qq$ as
$$\bigotimes_{n=1}^{\infty}M_n=M_1\otimes M_2\otimes M_3\otimes\dots.$$
In a similar way we can define for any sequence $(n_l)_{l\in\N}$ of integers each at least 2, its associated UHF algebra $M_{(n_l)_{l\in\N}}$.
So we have in this notation $\Qq=M_{(n)_{n\in\N}}$.
We say that sequences $(n_l)_{l\in\N}$ and $(m_l)_{l\in\N}$ are of the same \emph{type} if  $M_{(n_l)_{l\in\N}}\cong M_{(m_l)_{l\in\N}}.$.
We will often write for a constant sequence $(n)_{l\in\N}$ as $M_{n^{\infty}}$.
For a sequence $(n_l)_{l\in\N}$ let
$$\Ad_{ (n_l)_{l\in\N}}:\prod_{l=1}^{\infty}U(M_{n_l})\to \Aut M_{(n_l)_{l\in\N}}$$
be the map defined for $u_{n_l}\in U(M_{n_l})$ by
$$(u_{n_l})_{l=1}^{\infty}\mapsto \bigotimes_{l=1}^{\infty}\Ad u_{n_l}$$
on the algebraic direct limit and then take the extension to the $C^*$-direct limit. Usually we will write $\Ad=\Ad_{ (n_l)_{l\in\N}}$. We consider an action of $G$ to be a \emph{product type action} if it is represented by a group homomorphism that has the following factorisation
$$G\to \prod_{l=1}^{\infty} U(M_{n_l})\stackrel{\Ad}{\rightarrow}\Aut M_{(n_l)_{l\in\N}}$$
for some strictly positive sequence $(n_l)_{l\in\N}$.

We now state a fundamental way to manipulate product type actions.
\bl\label{regroup} Let $\Nn=\{\mathfrak{n}_1,\mathfrak{n}_2,\mathfrak{n}_3,\dots\}$ be a countable pairwise disjoint cover of $\N$ such that each $\mathfrak{n}_j$ is ordered and let $(n_l)_{l\in\N}$ be a sequence of positive integers. Then there is a canonical isomorphism
$$ \bigotimes_{j=1}^{\infty}\bigotimes_{l\in \mathfrak{n}_j}M_{n_l}\to M_{(n_l)_{l\in\N}}.$$
Furthermore, if there was an action of $G$ preserving the the decomposition on the left and inner on each factor, then there is a canonical action of $G$ preserving the decomposition on the right and inner on each factor. \qed
\el
When the covers are finite and sequential, this amounts to \emph{regrouping} the factors. When the covers are singletons, this gives a \emph{reordering} of the factors. It can also help us to reindex a double infinite tensor product into a single infinite tensor product. 

\subsection{Crossed products by product type actions}
For any unital $C^*$-algebra $A$, any discrete group $G$ and an action $\alpha$ of $G$ on $A$, define the crossed product $A\rtimes_{\alpha}G$ as the $C^*$-algebra with the presentation
$$A\rtimes_{\alpha}G=\langle a, u_g\,|\,a\in A, g\in G, \alpha_g(a)=u_gau_g^*\rangle.$$

Recall the (full) group $C^*$-algebra is given by $C^*(G)=\C\rtimes_{\id}G$.


\begin{thm}\label{productcp}Let $A$ be a UHF algebra, let $G$ be a discrete group and let $\alpha$ be a product type action of $G$ on $A$ preserving the decomposition $A\cong M_{(n_l)_{l\in\N}}$ for some sequence $(n_l)_{l\in\N}$. Let $g_m$ be the image of $g$ in $U(M_{n_m})$ and suppose
$$\Phi_m: M_{(n_l)_{l\leq m}}\otimes C^*(G)\to M_{(n_l)_{l\leq m+1}}\otimes C^*(G)$$
is the $*$-homomorphism defined on generators by
$$x\otimes u_g\mapsto ((x\otimes1_{n_{m+1}})g_{n+1})\otimes u_g.$$
Then 
$$A\rtimes_{\alpha}G\cong \varinjlim\left(\Phi_m,M_{(n_l)_{l\leq m}}\otimes C^*(G)\right).$$
\end{thm}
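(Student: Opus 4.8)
The plan is to realise the crossed product directly as the stated inductive limit, by building compatible $*$-homomorphisms $\theta_m\colon M_{(n_l)_{l\le m}}\otimes C^*(G)\to A\rtimes_\alpha G$ and checking that the map they induce on the limit is a $*$-isomorphism. Write $A_m:=M_{(n_l)_{l\le m}}$, so $A=\overline{\bigcup_m A_m}$, and put $v^{(m)}_g:=g_1\otimes\cdots\otimes g_m\in U(A_m)$ (here, as in the connecting map $\Phi_m$, $g_{m+1}$ denotes the image of $g$ in $U(M_{n_{m+1}})$ sitting in the last tensor factor of $A_{m+1}$). Since each $g\mapsto g_l$ is a homomorphism, so is $g\mapsto v^{(m)}_g$, and because $\alpha$ is of product type its restriction to $A_m$ is the inner automorphism $\Ad v^{(m)}_g$. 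Inside $A\rtimes_\alpha G$ set $\tilde u^{(m)}_g:=(v^{(m)}_g)^*u_g$. Using $\alpha_g(v^{(m)}_h)=v^{(m)}_g v^{(m)}_h (v^{(m)}_g)^*=v^{(m)}_{gh}(v^{(m)}_g)^*$ one checks that each $\tilde u^{(m)}_g$ commutes with $A_m$ and that $g\mapsto\tilde u^{(m)}_g$ is a unitary representation of $G$; since $A_m$ is finite dimensional this yields a $*$-homomorphism $\theta_m$ with $\theta_m(x\otimes u_g)=x(v^{(m)}_g)^*u_g$.

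The next step is compatibility with the connecting maps, namely $\theta_{m+1}\circ\Phi_m=\theta_m$. Under the identification $A_{m+1}=A_m\otimes M_{n_{m+1}}$ one has $v^{(m+1)}_g=v^{(m)}_g\otimes g_{m+1}$, and since $g_{m+1}$ is unitary,
\[
\theta_{m+1}\bigl(\Phi_m(x\otimes u_g)\bigr)=(x\otimes g_{m+1})(v^{(m)}_g\otimes g_{m+1})^*u_g=x(v^{(m)}_g)^*u_g=\theta_m(x\otimes u_g).
\]
Hence the $\theta_m$ induce a $*$-homomorphism $\theta\colon B\to A\rtimes_\alpha G$, where $B:=\varinjlim(\Phi_m,A_m\otimes C^*(G))$ with canonical maps $\iota_m\colon A_m\otimes C^*(G)\to B$. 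This $\theta$ is surjective: its range is closed, contains $\theta_m(x\otimes 1)=x$ for all $x\in A_m$ hence all of $A$, and contains $\theta_m(v^{(m)}_g\otimes u_g)=u_g$ for all $g$; since $A$ together with $\{u_g\}$ generates $A\rtimes_\alpha G$, $\theta$ is onto.

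The main point is injectivity. Because $G$ need not be amenable I avoid passing to a regular representation and instead build a left inverse $\psi\colon A\rtimes_\alpha G\to B$ from the universal property of the full crossed product. The assignments $A_m\ni a\mapsto\iota_m(a\otimes1)$ agree with the inclusions $A_m\hookrightarrow A_{m+1}$ because $\Phi_m(a\otimes1)=(a\otimes1)\otimes1$, so they define a unital $*$-homomorphism $\pi\colon A\to B$; the unitaries $\iota_m(v^{(m)}_g\otimes u_g)\in B$ are independent of $m$ because $\Phi_m(v^{(m)}_g\otimes u_g)=v^{(m+1)}_g\otimes u_g$, so $g\mapsto W_g:=\iota_m(v^{(m)}_g\otimes u_g)$ is a unitary representation of $G$ in $B$; and $(\pi,W)$ is covariant for $(A,G,\alpha)$ since for $a\in A_m$,
\[
W_g\,\pi(a)\,W_g^*=\iota_m\bigl((v^{(m)}_g\otimes u_g)(a\otimes1)(v^{(m)}_g\otimes u_g)^*\bigr)=\iota_m\bigl((v^{(m)}_g a (v^{(m)}_g)^*)\otimes1\bigr)=\pi(\alpha_g(a)).
\]

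The universal property then produces $\psi$ with $\psi|_A=\pi$ and $\psi(u_g)=W_g$, and evaluating on a generator gives
\[
\psi\theta\bigl(\iota_m(x\otimes u_g)\bigr)=\pi(x)\,\pi\bigl((v^{(m)}_g)^*\bigr)\,W_g=\iota_m\bigl((x\otimes1)((v^{(m)}_g)^*\otimes1)(v^{(m)}_g\otimes u_g)\bigr)=\iota_m(x\otimes u_g),
\]
so $\psi\circ\theta=\id_B$. Thus $\theta$ is injective, hence an isomorphism, which is the claim. I expect the only genuine work to be this injectivity step, together with the routine bookkeeping of the inductive-limit identifications needed to define $\pi$ and $W$; the conceptual input that makes everything run is that $\alpha_g$ is inner on each $A_m$, implemented by $v^{(m)}_g$, so that an untwisted $A_m\otimes C^*(G)$ rather than some genuinely twisted object appears at every stage.
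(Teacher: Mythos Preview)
Your argument is correct and follows essentially the same strategy the paper sketches: the key observation in both is that $\alpha$ is inner on each finite stage $A_m$, implemented by $v^{(m)}_g$, so that $A_m\rtimes_\alpha G\cong A_m\otimes C^*(G)$. The paper simply invokes the standard facts that the crossed product of an inductive limit is the limit of the crossed products and that inner actions yield tensor products, deferring the bookkeeping of connecting maps to the thesis; you have carried out exactly that bookkeeping by hand, building the forward map $\theta$ and an explicit inverse $\psi$ via the universal property of the full crossed product, which is a clean way to handle injectivity without any amenability hypothesis.
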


\begin{proof}Since the actions are of product type, the crossed product is a limit of crossed products of full matrix algebras by inner actions. Crossed products from inner actions are tensor products in a natural way. To get the explicit connecting maps one simply follows 	the definitions of the isomorphisms (see \cite{thesis}).														
\end{proof}

\bc\label{cpqduct} Suppose $G$ is a countable discrete amenable maximally almost periodic group and suppose $A$ is any UHF algebra. If $\alpha$ is a product type action of $G$ on $A$, then $A\rtimes_{\alpha}G$ is nuclear quasidiagonal and satisfies the UCT. Furthermore, if $G$ is almost abelian then $A\rtimes_{\alpha}G$ is locally type I.
\ec
\bp By Theorem \ref{cpqduct} it suffices to show that $C^*(G)$ has the required properties. Amenability of $C^*(G)$ is \cite{Gui}. That $C^*(G)$ is quasidiagonal is discussed in the introduction of \cite{dadarqd} among other places. Also, $C^*(G)$ satisfies the UCT by \cite[Proposition 6.1]{WELP}. That $C^*(G)$ is type I when $G$ is almost abelian follows from \cite[Theorem 1 and Theorem 3]{Kap2}.
\ep
\subsection{The tracial Rokhlin property}

\bd Let $A$ be a unital $C^*$-algebra and $\alpha\in\Aut A$. We say $\alpha$ has the tracial Rokhlin property, if for every $n'\in\N$, there exists $N'> n'$ such that for every $\epsilon>0$ and every finite set $\{a_1,\dots, a_n\}$ in $A$, there exist mutually orthogonal projections $p_1,p_2,\dots ,p_{N'}$ such that 
\bi
\item $[p_i,a_j]\eqe0$ for $1\leq i\leq N'$ and $1\leq j\leq n$,\\
\item $\alpha(p_i)\eqe p_{i+1}$ for $1\leq i\leq N'-1$,\\
\item $\tau(p)\eqe 1$.
\ei
If this is true with $p=1$, we say that $\alpha$ has the Rokhlin property. 
\ed

For group actions, the Rokhlin property for finite groups is nicest
\begin{eg}\label{finite} Let $G$ be a finite group, then the infinite tensor product of regular representations gives an action of $G$ on $M_{|G|^{\infty}}$ with the Rokhlin property.
\end{eg}

\subsection{Strong outerness}

Let $A$ be a UHF algebra with tracial state $\tau$. Write $(\pi_{\tau}, H_{\tau})$ for the Gelfand-Naimark-Segal (GNS) representation for $\tau$. Then $\alpha\in\Aut A$ is \emph{weakly inner} if when extended to an automorphism of $\pi_{\tau}(A)''$, the weak closure of the $\pi_{\tau}(A)$ in $B(H_{\tau})$, there is an unitary $u\in\pi_{\tau}(A)''$ such that $\alpha=\Ad u$. An action is called \emph{strongly outer} if $\alpha_g$ is not weakly inner for all $g\in G\setminus\{1\}$.

In the case  an automorphism $\alpha$ is weakly inner we have for any $\tau$
$$\pi_{\tau}(\alpha(a))=u\pi_{\tau}(a)u^*.$$
This means that there is a representation $\pi$ extending $\pi_{\tau}$ to $A\rtimes\Z$.
We also have the following trace on on $A\rtimes\Z$ extending $\tau$:
 $$\tau_{A\rtimes\Z}(x)=\langle\pi(x)\widetilde{1_A}, \widetilde{1_A}\rangle,$$
where $\widetilde{1_A}\in H_{\tau}$ is the cyclic vector of $H_{\tau}$.

Define  for $a\in A$, $\|a\|_2=\tau(a^*a)^{1/2}$. The next lemma is adapted from Matui-Sato \cite{MS2} and we thank Y. Sato for communicating their proof.

\begin{lem}\label{ot} Let $A_n$ be a sequence of unital simple nuclear $C^*$-algebras with unique tracial state and let $\alpha_n\in\Aut(A_n)$. For $A=\bigotimes_{n=1}^{\infty}A_n$ define $\alpha\in\Aut A$ by
$$\alpha=\bigotimes_{n=1}^{\infty}\alpha_n.$$
If there is a sequence of unitaries $v_n\in U(A_n)$ such that $\|\alpha_n(v_n)-v_n\|_2$ does not converge to $0$, then $\alpha$ is not weakly inner. 
\end{lem}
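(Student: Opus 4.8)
The plan is to argue by contradiction using the ``asymptotically central sequence'' trick underlying the Matui--Sato argument. Since each $A_n$ is unital simple nuclear with unique tracial state $\tau_n$, the algebra $A=\bigotimes_n A_n$ has unique tracial state $\tau=\bigotimes_n\tau_n$; set $M:=\pi_{\tau}(A)''$ and extend $\tau$ to the faithful normal trace on $M$, still written $\tau$, so that $\|x\|_2=\tau(x^*x)^{1/2}$ is defined on $M$. Suppose $\alpha$ were weakly inner: there is a unitary $u\in M$ with $\pi_{\tau}(\alpha(a))=u\pi_{\tau}(a)u^*$ for all $a\in A$. Identify $A$ with $\pi_{\tau}(A)\subseteq M$, and for each $n$ let $w_n\in A$ be the image of $v_n$ in the $n$-th tensor factor, i.e. $w_n=1\otimes\cdots\otimes1\otimes v_n\otimes1\otimes\cdots$, so that $\alpha(w_n)$ is the corresponding embedding of $\alpha_n(v_n)$.

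Because $\tau=\bigotimes_n\tau_n$ restricts to $\tau_n$ on the $n$-th factor, a one-line computation gives $\|\alpha(w_n)-w_n\|_2=\|\alpha_n(v_n)-v_n\|_2$, which by hypothesis does not converge to $0$; fix $\delta>0$ and a subsequence along which it stays $\geq\delta$. On the other hand $\alpha(w_n)=uw_nu^*$, and since left and right multiplication by unitaries are $\|\cdot\|_2$-isometries,
$$\|\alpha(w_n)-w_n\|_2=\|uw_nu^*-w_n\|_2=\|uw_n-w_nu\|_2=\|[u,w_n]\|_2.$$
So it suffices to prove $\|[u,w_n]\|_2\to0$, which contradicts the lower bound $\delta$.

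The key step is the claim that $\|[x,w_n]\|_2\to0$ for \emph{every} $x\in M$. For $x=a\in A$ this holds even in operator norm: $w_n$ lies in the $n$-th tensor factor, hence eventually commutes with any element supported in finitely many factors, and such elements are norm-dense in $A$. For general $x\in M$, use Kaplansky density to choose $a\in A$ with $\|a\|\leq\|x\|$ and $\|x-a\|_2$ arbitrarily small; then
$$\|[x,w_n]\|_2\leq\|(x-a)w_n\|_2+\|w_n(x-a)\|_2+\|[a,w_n]\|_2\leq 2\|x-a\|_2+\|[a,w_n]\|,$$
and letting $n\to\infty$ and then $a\to x$ proves the claim. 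Applying it with $x=u$ finishes the contradiction, so $\alpha$ is not weakly inner.

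The only delicate points, rather than genuine obstacles, are the bookkeeping with the trace: that $\bigotimes_n A_n$ really has $\bigotimes_n\tau_n$ as its unique tracial state, so that the $n$-th–factor identity $\|\alpha(w_n)-w_n\|_2=\|\alpha_n(v_n)-v_n\|_2$ is legitimate, and that $\|\cdot\|_2$ induces the strong operator topology on bounded subsets of $M$ (valid since $M$ is finite with faithful normal trace), which is what makes the Kaplansky approximation step go through. The commutator estimates themselves are routine.
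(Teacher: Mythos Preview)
Your proof is correct and follows essentially the same approach as the paper: embed $v_n$ as a central sequence $w_n$ in $A$, then show that weak innerness $\alpha=\Ad u$ forces $\|[u,w_n]\|_2\to 0$ by approximating $u$ in $\|\cdot\|_2$ by elements of $A$, contradicting $\|\alpha_n(v_n)-v_n\|_2\not\to 0$. The only cosmetic difference is that the paper routes the $2$-norm computation through an auxiliary trace on $A\rtimes\Z$ rather than working directly in $\pi_\tau(A)''$ with Kaplansky density as you do.
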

\begin{proof}
Define a (central) sequence 
$$v(n)=1\otimes\dots\otimes1\otimes v_n\otimes1\otimes\dots.$$
We will show that if $\alpha$ is weakly inner then 
$$\|\alpha(v(n))-v(n)\|_2\to0,$$
Now assume there is a unitary $u\in \pi_{\tau}(A)''$ such that $\alpha_g=\Ad u$ on $\pi_{\tau}(A)''$. Then there is a sequence $(x_k)_{k\in\N}$ in $\pi_{\tau}(A)$ such that $x_k\to u$. Let $\epsilon>0$, fix $k$ so that in $\pi(A\rtimes\Z)$ we have $\|u-x_k\|_{2,A\rtimes\Z}\approx_{\epsilon/4}0$ by way of $x_k$ strongly converging to $u$, and let $n$ be large enough so that $[x_k,v(n)]\approx_{\epsilon/2}0$, which is possible because $v(n)$ is a central sequence. We now calculate:
$$\begin{aligned}\|\alpha^{\otimes\N}_g(v(n))-v(n)\|_{2,A}&=\|uv(n)u^*-v(n)\|_{2,A\rtimes\Z}\\
											      &= \|uv(n)-v(n)u\|_{2,A\rtimes\Z}\\	
	 (\|ab\|_2\leq\|a\|_2\|b\|)\quad 				 &\leq 2\|u-x_k\|_{2,A\rtimes\Z}+\|x_kv(n)-v(n)x_k\|\\
	 			 &\eqe0.\end{aligned}$$
 \end{proof}
\bt[Kishimoto]\label{sotrp} Let $A$ be a UHF algebra and $\alpha\in\Aut(A)$. Then $\alpha$ has the tracial Rokhlin property if and only if $\alpha^k$ is strongly outer for all $k>0$.
\et
\bp Combine \cite[Theorem 1.3]{KishZ} and \cite[Theorem 4.5]{kish3}.
\ep

\subsection{Crossed products by outer actions}

\bprop\label{ussn} Let $G$ be a countable discrete amenable group, let $A$ be a UHF algebra and let $\alpha$ be an action of $G$ on $A$. Then we have
\bi
\item[(i)] if $\alpha$ is outer, then  $A\rtimes_{\alpha}G$ is nuclear and simple,\\
\item[(ii)] if $\alpha$ is strongly outer, then  $A\rtimes_{\alpha}G$ has a unique tracial state,\\
\item[(iii)] if $\alpha$ is strongly outer and $G$ is elementary amenable, then  $A\rtimes_{\alpha}G$ is $\Zz$-stable and has real rank zero.\\
\ei
\eprop
\bp 
\bi
\item[(i)] Amenable is \cite{JR}. Simplicity is \cite[Theorem 3.1]{Kish1}.\\
\item[(ii)]  The proof of \cite[Lemma 4.3]{kish3} can be adapted here since strongly outer is the same as uniformly outer \cite[Theorem 4.5]{kish3}.\\
 \item[(iii)] \cite[Theorem 4.9]{MS2} shows $\Zz$-stable. Given $\Zz$-stability by \cite[Corollary 7.3(ii)] {Rordam1} for real rank zero it suffices to show the image of  $K_0(A\rtimes_{\alpha}G)$ under the trace is dense in $\R$ but this contains the image of $K_0(A)$, which is dense.\ei
\ep

\section{``Bump up''s and ``Cut down''s}
\bd Let $A$ be a UHF algebra, $\alpha\in\Aut A$ and let $1\leq k< \infty$. We say $\alpha$ has the order $k$ tracial Rokhlin property, if 
 for every $\epsilon>0$ and every finite subset $\{a_1,\dots, a_n\}$ in $A$, there exist mutually orthogonal projections $p_1,p_2,\dots ,p_{k}$ such that 
\bi
\item $[p_i,a_j]\eqe0$ for $1\leq i\leq k$ and $1\leq j\leq n$,\\
\item $\alpha(p_i)\eqe p_{i+1}$ for $1\leq i\leq k$ with $p_{k+1}=p_1$,\\
\item $\tau(p)\eqe 1$.
\ei
If this is true with $p=1$, we say that $\alpha$ has the order $k$ Rokhlin property. We define the order $\infty$ (tracial) Rokhlin property to mean the (tracial) Rokhlin property.
 \ed

\bd  Let $G$ be an elementary amenable group and let $A$ be a UHF algebra. Suppose each $g\in G$ has order $k(g)$ and $\alpha$ is an action of $G$ on $A$. We say that $\alpha$ has the (tracial) Rokhlin property, if $\alpha_g$ has the order $k(g)$ (tracial) Rokhlin property for all $g\in G$.
\ed
\bl\label{ap} Suppose $\delta\in (0,1/4)$ and $a\in A$ with $a=a^*$ and $a^2\eqd{}a.$
Then there is a projection $p$ such that $p\approx_{2\delta}a.$
\el
\bp This is well-known.
\ep
\bl\label{orthoP} Let $\epsilon>0$ and let $n\in\N$. There is a $\delta>0$ such that for any projections $q_1,\dots,q_n$ such that $q_iq_j\eqd{} 0$,
 there exist mutually orthogonal projections $p_1,\dots,p_n$ such that $p_i\eqe q_i$ for $1\leq i\leq n$.
 
\el
\bp This is well-known.
\ep

\bl\label{Ntrp} Suppose $\alpha\in \Aut M_{(n_l)_{l\in\N}}$ preserves the given decomposition and has the tracial Rokhlin property. Then for any $n'\in\N$ there is $N'>n'$ such that for any finite subset $\{a_1,\dots a_n\}$ and any $\epsilon>0$ there exists $j\geq1$ with $N=n_1n_2\dots n_j$ and mutually orthogonal projections $p_1,\dots, p_{N'}$ in $M_N$ such that
\bi
\item $[p_i,a_j]\eqe0$ for $1\leq i\leq N'$, $1\leq j\leq n$, \\
\item $\alpha(p_i)\eqe p_{i+1}$ for $1\leq i\leq N'-1$,\\
\item $\tau(p_1+\dots+p_{N'})\eqe1$.
\ei
If $\alpha$ has the order $k$ tracial Rokhlin property, ignore $n'$ and require $N'=k$ and $\alpha(p_{k})\eqe p_1$.
\el
\bp Let $n'\in\N$ and fix $N'>n'$ from $\alpha$ having the tracial Rokhlin property. Let $\epsilon>0$ and let $\{a_1,\dots a_n\}$ be a finite subset of $M_{(n_l)_{l\in\N}}$. Without loss of generality assume $\|a_j\|\leq 1$ for $1\leq j\leq n$ and let $\delta=\delta(\epsilon/100,N')$ as in Lemma \ref{ap} such that $\delta\leq \epsilon/100\leq1$.
Since $\alpha$ has the tracial Rokhlin property for $N'>n'$, there exist $q_1,\dots, q_{N'}$ mutually orthogonal projections such that
\bi
\item $[q_i,a_j]\approx_{\delta}0$ for $1\leq i\leq N'$ and $1\leq j\leq n$, \\
\item $\alpha(q_i)\approx_{\delta} q_{i+1}$ for $1\leq i\leq N'-1$,\\
\item $\tau(q_1+\dots+q_{N'})\approx_{\delta}1$.
\ei
By the direct limit definition, there exists $j\geq1$ such that with $N=n_1n_2\dots n_j$, there are self-adjoint $q_1',\dots, q_{N'}'\in M_N\subset M_{(n_l)_{l\in\N}}$ such that 
$$q_i\approx_{\delta/32} q_i'$$
for $1\leq i\leq N'$.
We check $q_i'\approx_{\frac{\delta}{32}(3+\frac{\delta}{32})}(q_i')^2$ and apply Lemma \ref{ap} to get projections $p_i'$ such that 
$$q_i'\approx_{\delta/4} p_i'$$
for $1\leq i\leq N'$. We then check $p_i'p_j'\approx_{\delta}0$ and apply Lemma \ref{orthoP} to mutually orthogonalise the projections $p_i'$ in $M_N$ to obtain $p_1,\dots, p_{N'}\in M_N$ so that 
$$p_i\approx_{\epsilon/100} p_i'.$$ 
It is now routine to check that $p_1,\dots, p_{n'}$ satisfy our claim. See \cite{thesis}.

\ep
\bl\label{ltrp} Suppose $\alpha\in \Aut M_{(n_l)_{l\in\N}}$ preserves the given decomposition and has the tracial Rokhlin property. Then for any $n'\in\N$, any finite subset $\{a_1,\dots a_n\}$ and any $\epsilon>0$ there exists $j\geq1$ such that with $N=n_1n_2\dots n_j$, there exist mutually orthogonal projections $p_1,\dots, p_{n'}$ in $M_N$ such that
\bi
\item $[p_i,a_j]\eqe0$ for $1\leq i\leq n'$ and $1\leq j\leq n$, \\
\item $\alpha(p_i)\eqe p_{i+1}$ for $1\leq i\leq n'-1$,\\
\item $\tau(p_1+\dots+p_{n'})\eqe1$.
\ei
If $\alpha$ has the order $k$ tracial Rokhlin property, ignore $n'$ and require $N'=k$ and $\alpha(p_{k})\eqe p_1$.
\el
\bp Let $n'\in\N$, let $\{a_1,\dots a_n\}$ be a finite subset and let $\epsilon>0$.
Use Lemma \ref{Ntrp} with some $N'$ so large that $\frac{n'}{N'}<\frac{\epsilon}{2}$ and writing $N'=n'Q'+r'$ with $0\leq r'<n'$ and $Q'\in\N$ gives $r'$ satisifying $r'/N'<\epsilon/2$ and get projections $q_1,\dots,q_{N'}$ relative to $\{a_1,\dots a_n\}$ and $\epsilon/N'$. Then group the first $n'Q'$ projections into $n'$ groups as follows: Let $r(j)$ be the remainder when $j$ is divided by $Q'$ and for $1\leq i\leq n'-1$ set
$$p_i=\sum_{r(j)=i\atop 1\leq j\leq n'Q'}q_j$$
and
$$p_{n'}=\sum_{r(j)=0\atop 1\leq j\leq n'Q'}q_j.$$
\ep

\bl\label{Lie} Let $\epsilon>0$, let $N,N'\in\N$, let $\{e_{i,j}\,|\,1\leq i,j \leq N\}$ be a set of matrix units for $M_N$ and let $x\in M_N\otimes M_{N'}$ satisfy 
$$[x,e_{i,j}\otimes1_{N'}]\eqe0$$
 for  $1\leq i,j\leq N$. Then there exists $b\in M_{N'}$ such that
$$x\approx_{10N^3\epsilon} 1\otimes b.$$
\el
\bp This is straightforward.
\ep

\bprop\label{Rokfunction} Suppose $\alpha\in \Aut M_{(n_l)_{l\in\N}}$ preserves the given decomposition. Then $\alpha$ has the tracial Rokhlin property if and only if there is a regrouping $(N_l)_{l\in\N}$ of $(n_l)_{l\in\N}$ such that $M_{N_l}$ contains mutually orthogonal projections $p_1,\dots, p_{l}$ satisfying
\bi
\item $\alpha(p_i) \approx_{\frac{1}{2^l}} p_{i+1}$ for $i\leq l-1$,\\
\item $\tau(p_1+\dots+p_{l})\approx_{\frac{1}{2^l}}1$.
\ei We can also replace $\frac{1}{2^l}$ with any $\epsilon_l$ such that $l\epsilon_l\to0$.
\eprop
\bp 
Assume $\alpha$ has the tracial Rokhlin property. We define the integers $N_l$ inductively on $l$. To get $N_1$, we apply Lemma \ref{ltrp} with $n=0$, $\epsilon=1/2$ and $n'=1$. 

Now assume for an induction that we have found $N_1,\dots, N_l$ with the properties required. Let $N=N_1\cdots N_l$, let $\epsilon=1/2^{l+5}$ and let $\{e_{i,j}\,|\,i,j\leq N\}$ be matrix units for $M_N$. By Lemma \ref{Ntrp} there exists $s>0$ such that with $N_{l+1}=n_{j+1}\dots n_{j+s}$, there are $l+1$ mutually orthogonal projections $q_1,\dots, q_{l+1}$
in $M_N\otimes M_{N_{l+1}}$ such that 
\bi
\item $[q_i,e_{i,j}]\approx_{\frac{\epsilon}{10N^3}}0$ for all $i,j\leq N$,\\
\item $\alpha(q_i) \approx_{\epsilon} q_{i+1}$ for $i\leq l$,\\
\item $\tau(q_1+\dots+q_{l+1})\approx_{\epsilon}1$.
\ei
Now by Lemma \ref{Lie} we have $q_i\eqe 1\otimes b_i$ for some self-adjoint $b_i\in M_{N_{l+1}}$. Then $b_i^2\approx_{4\epsilon}b_i$, so Lemma \ref{ap} gives a projection $p_i\in M_{N_{l+1}}$ such that $p_i\approx_{8\epsilon} b_i$. We check that these projections satisfy our requirements. 


Conversely, let $n'\in\N$ and fix $N'>n'$. Let $\epsilon>0$ and let $\{a_1,\dots, a_n\}$ be a finite subset of $M_{(N_l)_{l\in\N}}$. By the direct limit decomposition there exists $L>0$ such that with $N=N_1\dots N_L$, there exist $a_1(N),\dots, a_n(N)\in M_N$ such that $a_j\approx_{\epsilon/2} a_j(N)$ for $1\leq j\leq n$. Now choose $L'\in\N$ 
\bi
\item $L'>L$\\
\item $N'/L'<\epsilon/2$\\
\item $L'/2^{L'}<\epsilon/2$\\
\ei
Then write $L'$ in quotient remainder form $L'=N'Q'+r'$ with $Q'\in\N$ and $r'\leq N'-1$. If $q_1,\dots, q_{L'}$ are the promised $L'$ mutually orthogonal projections in $M_{L'}$ from the assumption, then define the projections $p_1,\dots, p_{N'}$ as follows: let $r(j)$ be the remainder when $j$ is divided by $N'$. Then for $1\leq i\leq N'-1$ set
$$p_i=\sum_{r(j)=i\atop 1\leq j\leq N'Q'}q_j$$
and 
$$p_{N'}=\sum_{r(j)=0\atop 1\leq j\leq N'Q'}q_j.$$
It can be easily checked these are the required projections.
\ep
\bprop\label{Rokkfunction}
Suppose $\alpha\in \Aut M_{(n_l)_{l\in\N}}$ preserves the given decomposition. Then $\alpha$ has the order $k$ tracial Rokhlin property if and only if there is a regrouping $(N_l)_{l\in\N}$ of $(n_l)_{l\in\N}$ such that $M_{N_l}$ contains mutually orthogonal projections $p_1,\dots, p_{k}$ satisfying
\bi
\item $\alpha(p_i) \approx_{\frac{1}{2^l}} p_{i+1}$ for $i\leq k-1$ and $\alpha(p_k)=p_1$,\\
\item $\tau(p_1+\dots+p_{k})\approx_{\frac{1}{2^l}}1$.
\ei We can also replace $\frac{1}{2^l}$ with any $\epsilon_l$ such that $\epsilon_l\to0$.\qed
\eprop


\bl[Bump-up]\label{bumpup} Suppose $\alpha\in \Aut M_{(s_l)_{l\in\N}}$ preserves the given decomposition and has the tracial Rokhlin property. Then there is a regrouping $(S_l)_{l\in\N}$ of $(s_l)_{l\in\N}$ such that for any $U_l\in U(M_{S_l})$ with
$$\alpha=\bigotimes_{l=1}^{\infty} \Ad U_l$$
and any sequence $(n_l)_{l\in\N}$, there is a regrouping $(N_l)_{\l\in\N}$ of $(n_l)_{l\in\N}$ and integers $Q_l$ and $0\leq r_l<S_l$ for each $l\in\N$ such that
$$\beta=\bigotimes_{n=1}^{\infty} \Ad (\diag(U_l\otimes1_{Q_l},1_{r_l}))$$
has the tracial Rokhlin property on $M_{(N_l)_{l\in\N}}$.
\el
\bp Regroup $(s_l)_{l\in\N}$ using Proposition \ref{Rokfunction} to get a sequence $(S_l)_{l\in\N}$. Now regroup $(n_l)_{l\in\N}$ into $(N_l)_{l\in\N}$ so that 
$$\frac{S_l}{N_l}<\frac{1}{2^l}.$$
Then upon writing $N_l$ in quotient remainder form
$$N_l=Q_lS_l+r_l,$$
for unique $Q_l\in\N$ and $0\leq r_l<S_l$, we have $r_l/N_l<1/2^{l}.$

We now check that $\beta$ has the tracial Rokhlin property. By Proposition \ref{Rokfunction} it suffices to check for each $l\in\N$ that $M_{N_l}$ contains $l$ mutually orthogonal projections $p_1,\dots, p_{l}$ such that for $l\epsilon_l\to0$
\bi
\item $\beta(p_i) \approx_{\epsilon_l} p_{i+1}$ for $1\leq i\leq l-1$,\\
\item $\tau(p_1+\dots+p_{l})\approx_{\epsilon_l}1$.
\ei
Now by construction, we can find $l$ mutually orthogonal projections $p_1, p_2,\dots, p_{l}$ in $M_{S_l}$ such that
\bi
\item $\alpha( p_i )\approx_{1/2^{l}} p_{i+1}$ for $1\leq i\leq l-1$,\\
\item $\tau(p_1+\dots+p_{l})\approx_{1/2^{l}}1$.
\ei
If we regard $M_{S_l}$ as a subalgebra of $M_{N_l}$ via the block diagonal embedding with $Q_l$ copies of $x$ and one zero block of size $r_l$ given by
$$x\mapsto \diag(x\otimes 1_{Q_l},0_{r_l}),$$
then $\beta$ restricts to $\alpha$ on $M_{S_l}$. So $p_1,\dots, p_l$ will work if they have the right trace. Let $\tau_n$ denote the tracial state on $M_{n}$. Then
$$\ba \tau_A(p)&=\tau_{N_l}(p)\\
			&=\tau_{N_l}(1_{S_l})\tau_{S_l}(p)\\
			&=\frac{Q_jS_l}{N_l}\tau_{S_l}(p)\\
			&=\left(1-\frac{r_l}{N_l}\right)\tau_{S_l}(p)\\
			&\approx_{\frac{1}{2^{l}}} 1-\frac{r_l}{N_l}\\
			&\approx_{\frac{1}{2^{l}}} 1.	\ea$$
Since $l/2^{l-1}\to0$, we are done by Proposition \ref{Rokfunction}.
\ep

\bl[Cut-down]\label{cutdown} Let $(n_l)_{l\in\N}$ be a sequence and suppose
$$\alpha=\bigotimes_{l=1}^{\infty} \Ad (\diag(\mu_1^{(l)},\dots,\mu_{n_l}^{(l)}))$$
has the order $k$ tracial Rokhlin property with $\mu_i^{(l)}$ all $k$-th roots of unity. Then there is a regrouping $(N_l)_{l\in\N}$ of $(n_l)_{l\in\N}$ such that
$$\alpha=\bigotimes_{l=1}^{\infty} \Ad (\diag(\lambda_1^{(l)},\dots,\lambda_{N_l}^{(l)}))$$
has $k$ distinct entries in each tensor factor for large $l$.
\el
\bp
By Proposition \ref{Rokkfunction} there is a regrouping $(N_l)_{l\in\N}$ of $(n_l)_{l\in\N}$ so that for each $l\in\N$ there are $k$ non-zero mutually orthogonal projections $p_1,\dots,p_{k}\in M_{N_l}$ such that 
\bi
\item $\alpha(p_i)\approx_{\frac{1}{2^l}}p_{i+1}$ for $1\leq i\leq k$\\
\item $p_{k+1}=p_1$.
\ei
Let $\epsilon>0$. We take $\delta=\delta(\epsilon, k)$ in Lemma \ref{orthoP} and $l$ large enough so that $\frac{1}{2^l}<\delta/10$. Let $\alpha=\Ad u$ on $M_{N_l}$ for $u\in U(M_{N_l})$ and $u^k=1$. Decompose $\C^{N_l}$ into $e^{2\pi ij/k}$-eigenspaces $V_j$ for $u$. By taking a subprojection if necessary we can assume that $p_1$ has rank one. Let $v$ be a unit vector that spans the range of $p_1$. Then we write for unique $v_j\in V_j$:
$$v=\sum_{j=0}^{k-1}v_j.$$
Let $V$ be the subspace spanned by $\{v_j\,|\, 0\leq j\leq k-1\}$. We see that both $p_1$ and $u$ preserve $V$.  Hence also $\alpha^j(p_1)=u^jp_1u^{-j}$ preserves $V$ for all $0\leq j\leq k-1$. Therefore we have $k$ non-zero $\delta$-approximately mutually orthogonal projections in $\End(V)$. By Lemma \ref{orthoP} there exist $k$ non-zero exactly mutually orthogonal projections $\End (V)$, making $V$ a $k$-dimensional space. In particular, $V_j\neq0$ for each $j$ as required.
\ep

\begin{lem}\label{trext}\label{rext}
Let $A$ and $B$ be unital nuclear $C^*$-algebras, let $\alpha\in\Aut A$, let $\beta\in\Aut B$ and consider $\alpha\otimes\beta\in\Aut(A\otimes B)$. (For the claims about the tracial Rokhlin property we assume that every trace on $A\otimes B$ restrict to an extremal trace on either $A$ or $B$, which is true for UHF algebras). We have:
\bi
\item If $\alpha$ satisfies the Rokhlin property (resp.\ tracial Rokhlin property), then $\alpha\otimes\beta$ satisfies the Rokhlin property (resp.\ tracial Rokhlin property).\\
\item If $\alpha$ satisfies the order $k$ Rokhlin property (resp.\ order $k$ tracial Rokhlin property), then $\alpha\otimes\beta$ satisfies the order $k$ Rokhlin property (resp.\ order $k$ tracial Rokhlin property).\\
\item If  $\alpha^k$ has the Rokhlin property (resp.\ tracial Rokhlin property) for some $k>1$ and $\beta$ has the order $k$ Rokhlin property (resp.\ order $k$ tracial Rokhlin property), then $\alpha\otimes\beta$ has the Rokhlin property (resp.\ tracial Rokhlin property). 
\ei 
\el
\begin{proof}The obvious proof works but is lengthy (see \cite{thesis}).     \end{proof}

\bc\label{sotensor} Let $A$ be a UHF algebra and $\alpha,\beta\in\Aut(A)$. Then $\alpha\otimes\beta$ is strongly outer if either $\alpha$ or $\beta$ is strongly outer in all powers.
\ec
\bp Theorem \ref{sotrp} is valid for the order $k$ tracial Rokhlin property and combine this with Lemma \ref{trext}.
\ep
\section{Strongly outer product type actions}
We prove here the existence of strongly outer product type actions.

\bl\label{onQ} Let $G$ be a countable discrete group and suppose that for some sequence $(n_l)_{l\in\N}$ in $\N$ there exists a strongly outer action 
$$\alpha: G{\hookrightarrow}\prod_{l=1}^{\infty}U(M_{n_l})\stackrel{\Ad}{\rightarrow}\Aut M_{(n_l)_{l\in\N}}.$$
Then there exists a strongly outer action
$$\beta: G{\hookrightarrow}\prod_{n=1}^{\infty}U(M_n)\stackrel{\Ad}{\rightarrow}\Aut\Qq.$$
\el
\bp First we can if necessary regroup the tensor factors so that the sequence $(n_l)_{l\in\N}$ is strictly increasing. Next consider the action $\alpha\otimes\id$
$$\alpha\otimes\id:G\to\Aut\left(\bigotimes_{l=1}^{\infty}M_{n_l}\right)\otimes\left(\bigotimes_{l=1}^{\infty}\bigotimes_{n_{l-1}<n<n_l}M_n\right),$$
which is strongly outer by Corollary \ref{sotensor}. Now regroup the factors using Lemma \ref{regroup}.
\ep
So any groups that can act via a strongly outer product type action on some UHF algebra can also be found to act in a standard way on $\Qq$.
We now enlist the help of a Lemma taken from Matui-Sato \cite[Lemma 6.13]{MS2}. Thanks again to Y. Sato for communicating a proof.
\bl\label{socor} Suppose $A$ is a unital simple nuclear $C^*$algebra with a unique tracial state and $\alpha:G\to \Aut(A)$ corresponds to an action of $G$ on $A$ and $\ker\alpha=\{1_G\}$. Then the action $\alpha^{\otimes\N}$ of $G$ on $A^{\otimes\N}=\bigotimes_{n=1}^{\infty}A$ is strongly outer.
\el
\bp We use Lemma \ref{ot} to show that for each $g\in G$ not $1$, the automorphism that $g$ acts by is not weakly inner. Since $\alpha$ is not trivial on $A$ there is some $u\in A$ such that $\alpha(v)\neq v$. We can take $v$ to be unitary since unitaries span $A$ and $\alpha$ is linear. In particular, the sequence $\|\alpha_n(v_n)-v_n\|_2$ in Lemma \ref{ot} with $\alpha_n=\alpha$ and $v_n=v$ for all $n\in\N$ is constant and non-zero.
\ep
\bprop\label{everyouter} Suppose $G$ is a discrete group with a product type action
$$\alpha: G{\hookrightarrow}\prod_{n=1}^{\infty}U(M_{n})\stackrel{\Ad}{\rightarrow}\Aut\Qq,$$
such that $\ker\alpha=\{\id\}$. Then there exists a strongly outer product type action
$$\beta:G{\hookrightarrow}\prod_{n=1}^{\infty}U(M_{n})\stackrel{\Ad}{\rightarrow}\Aut\Qq.$$
\eprop

\bp Consider the action $\alpha^{\otimes\N}$ of $G$ on $\Qq^{\otimes\N}$. It is strongly outer by Lemma \ref{socor} and factorises as
$$\alpha^{\otimes\N}:G\to\prod_{m=1}^{\infty}\prod_{n=1}^{\infty} U(M_n)\to\Aut\left(\bigotimes_{m=1}^{\infty} \bigotimes_{n=1}^{\infty} M_n\right).$$
Now apply Lemma \ref{regroup} and Lemma \ref{onQ}.
\ep
\bc\label{mapso} Let $G$ be a discrete maximally almost periodic group. There exists a strongly outer product type action
$$\beta:G{\hookrightarrow}\prod_{n=1}^{\infty}U(M_{n})\stackrel{\Ad}{\rightarrow}\Aut\Qq.$$
\ec
\bp Combine  Lemma \ref{map} and Proposition \ref{everyouter}.
\ep
We now transfer this to other UHF algebras by ``bumping-up''.

\bl\label{oneg}Suppose $G$ is a discrete maximally almost periodic group and let $g\in G$ have order $1\leq k\leq \infty$. Then for any $(n_l)_{l\in \N}$ there is a regrouping $(N_l)_{l\in\N}$ and a map
$$\beta[g]: G{\hookrightarrow}\prod_{l=1}^{\infty}U(M_{N_l})\stackrel{\Ad}{\rightarrow}\Aut M_{(N_l)_{l\in\N}},$$
such that $\beta[g]_g$ has the order $k$ tracial Rokhlin property. 
\el
\bp Assume $k=\infty$. The other cases are simpler. By Corollary \ref{mapso} we have a strongly outer product type action of $G$ on $\Qq$. In particular, $\alpha_g$ has the tracial Rokhlin property by Theorem \ref{sotrp} and we can apply Lemma \ref{bumpup} for $(n_l)_{l\in\N}$ to get $(N_l)_{l\in\N}$, $(Q_l)_{l\in\N}$, $(r_l)_{l\in\N}$ 
and an automorphism $\beta$ with the tracial Rokhlin property. Since we have a map $\pi_l:G\to U(M_{S_l})$ from being product type we can extend the definition of $\beta$ to an action $\beta[g]$ of $G$ on $M_{(N_l)_{l\in\N}}$ via the maps
$$\beta_l[g]:h \mapsto \Ad(\diag(\pi_l(h)\otimes1_{Q_l},1_{r_l})).$$
Now set $\beta[g]=\otimes_l\beta_l[g]$ and note $\beta[g]_g=\beta$.
\ep

We come to our conclusion:
\bt\label{tRokA} Suppose $G$ is any countable discrete maximally almost periodic group and $A$ is any UHF algebra. Then there exists a strongly outer product type action $\alpha$ of $G$ on $A$. 
\et
\bp Suppose that $A\cong M_{(n_l)_{l\in\N}}$ for some sequence $(n_l)_{l\in\N}$. We can partition this sequence into disjoint subsequences $(n_l(g))_{l\in\N}$ indexed by $g\in G$. Since $G$ is a discrete maximally almost periodic group, by Lemma \ref{oneg}, for each $g\in G$, there is a sequence $(N_l(g))_{l\in\N}$ of the same type as $(n_l(g))_{l\in\N}$ and a map
$$\beta[g]: G{\hookrightarrow}\prod_{l=1}^{\infty}U(M_{N_l(g)})\stackrel{\Ad}{\rightarrow}\Aut M_{(N_l(g))_{l\in\N}},$$
such that $\beta[g]_g$ has the order $k(g)$ tracial Rokhlin property. Taking the product over $g\in G$, we get
$$\beta: G{\hookrightarrow}\prod_{g\in G}\prod_{l=1}^{\infty}U(M_{N_l(g)})\stackrel{\Ad}{\rightarrow}\Aut\Big(\bigotimes_{g\in G}\bigotimes_{l=1}M_{N_l(g)}\Big),$$
where $\beta_g$ has the order $k(g)$ tracial Rokhlin property for all $g\in G$ by Lemma \ref{trext} and hence the action is strongly outer. Now rearrange using Lemma \ref{regroup} to make it more obviousy product type. 
\ep
\bc Suppose $G$ is any countable discrete maximally almost periodic elementary amenable group and $A$ is any UHF algebra. Then there exists a product type action $\alpha$ of $G$ on $A$ with the Matui-Sato tracial Rokhlin property. 
\ec
\bp Use \cite[Theorem 3.7]{MS2}, which works for ``Property (Q)'' also. \ep

\section{Examples of strongly outer product type actions}

We give here a sufficient condition for a tensor product automorphism to be strongly outer in terms of only its trace. We then use it to exhibit some examples of abelian group actions on $\Qq$. In this section only, we adopt the notation  $[u,v]=uvu^*v^*$ for unitaries $u,v\in A$.
\bprop\label{qo}Suppose there are sequences of unitaries such that $\tau([u_n,v_n])$ does not converge to $1$. Then
$\alpha=\bigotimes \Ad u_n$ and $\beta=\bigotimes\Ad v_n$ are both strongly outer as automorphisms of $\Qq$.
\eprop
\bp
Applying Lemma \ref{ot} with $A_n=M_n$ and $u(n)$ the central sequence therein, we check
$$\ba\|\beta(u(n))-u(n)\|_2^2&=\tau_n((v_nu_nv_n^*-u_n)^*(v_nu_nv_n^*-u_n))\\
														&=\tau_n(1-u_n^*v_nu_nv_n^*-v_nu_n^*v_n^*u_n +1)\\
														&=2-\tau_n(u_n^*v_nu_nv_n^*)-\tau_n(v_nu_n^*v_n^*u_n)\\
														&=2-\tau_n(v_nu_nv_n^*u_n^*)-\tau_n(u_nv_nu_n^*v_n^*)\\
														&=2-\tau_n([u_n,v_n]^*)-\tau_n([u_n,v_n])\\
														&=2[1-\text{Re}\,\tau_n([u_n,v_n])].\ea$$
Note now that if $(w_n)_{n\in\N}$ is a sequence of unitaries and $\tau(w_n)\nrightarrow1$,  then Re $\tau(w_n)\nrightarrow1$ (for explicit proof see \cite{thesis}).														
Hence we see that $\beta$ is strongly outer. Now $\tau([u_n,v_n])=\tau([v_n,u_n^*])$ implies that $\alpha^{-1}$ is strongly outer and hence $\alpha$ is strongly outer. \ep

\bl\label{divgp} There are strongly outer actions
$$\R{\hookrightarrow}\prod_{n=1}^{\infty}U(M_n)\stackrel{\Ad}{\rightarrow}\Aut\Qq$$
and
$$\R/\Z{\hookrightarrow}\prod_{n=1}^{\infty}U(M_n)\stackrel{\Ad}{\rightarrow}\Aut\Qq.$$
\el
\bp  Let $\theta$ be a real number and let
$$\theta_n=\begin{cases}&1\quad\text{if $n$ is odd}\\
					&\theta\quad\text{if $n$ is even}.\end{cases}$$
For each $r\in\R$ and $n\in\N$, define a homomorphism $\R\to U(M_n)$ by
$$v_n(r)=\diag(e^{2\pi\theta_n i lr})_{l=1}^n,$$

We again use the condition in Proposition \ref{qo} to check that it is strongly outer.
Let $u_n$ be the permutation matrix corresponding to the cycle $(123\dots n)$. Then (for calculation see \cite{thesis})
$$\tau([u_n,v_n(r)])=e^{-2\pi\theta_n i r}(n^{-1}e^{-2n\pi i r}+n^{-1}(n-1)).$$
Now two limit points of this sequence, corresponding to odd and even $n$, are $ e^{-2\pi\theta i r}$ and $e^{-2\pi i r}$. We see if we chose $\theta$ to irrational, then the limit is $1$ only if $r=0$. If $\theta=1$, then the limit is $1$ only if $r\in\Z$.		 
\ep
Restricting these to $\Q$ we get:
\bc\label{Qso} There exist strongly outer product type actions of $\Q$ and $\Q/\Z$ on $\Qq$.  \qed
\ec

\bprop\label{direct sum} If for each $j\in \N$, $G_j$ has a strongly outer product type action $\alpha_j$ of $G_j$ on $\Qq$, then there exists a strongly outer product type action $\alpha$ of the infinite direct sum $G=\bigoplus_{j\in\N}G_j$ on $\Qq$.
\eprop 
\bp Take $\alpha=\bigotimes_j\alpha_j$. This is strongly outer by Lemma $\alpha\otimes\id$.
\ep
\bc\label{abelian} Every countable discrete abelian group $G$ has a strongly outer product type action on $\Qq$.
\ec
\bp Combine Corollary \ref{Qso} and Propositions \ref{direct sum} and \ref{everyab}.
\ep

\section{Product type actions on $\Qq$ with the Rokhlin property}
We continue our investigation in a related setting.
            
\begin{prop}\label{ext}Let $G$ be a discrete group with a normal subgroup $N$ and let $q:G\to G/N$ be the quotient map. Suppose $A$ and $B$ are unital nuclear $C^*$-algebras, $\alpha$ is an action of $G$ on $A$ such that $\alpha|_N$ has the pointwise Rokhlin property, and $\beta$ is an action of $G/N$ on $B$ that has the pointwise Rokhlin property. Then the action $\gamma$ of $G$ on $A\otimes B$, defined by $\gamma=\alpha\otimes(\beta\circ q)$ has the pointwise Rokhlin property.

\end{prop}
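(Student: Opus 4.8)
The plan is to unwind the definition of the pointwise Rokhlin property: since $\gamma_g=\alpha_g\otimes\beta_{q(g)}$ (where $(\beta\circ q)_g=\beta_{q(g)}$), it suffices to fix $g\in G$ and show that $\gamma_g$ has the order $k(g)$ Rokhlin property, $k(g)\in\{1,2,\dots\}\cup\{\infty\}$ being the order of $g$. I would organise the argument around the order $m$ of $q(g)$ in $G/N$, and dispatch several cases immediately via Lemma \ref{trext}, using only its (non-tracial) Rokhlin-property assertions, which need no hypothesis on traces, together with $A\otimes B\cong B\otimes A$. If $g\in N$ (equivalently $m=1$, $\beta_{q(g)}=\id_B$), then $\alpha_g$ has the order $k(g)$ Rokhlin property by the hypothesis on $\alpha|_N$, and the first two items of Lemma \ref{trext} give the same for $\gamma_g=\alpha_g\otimes\id_B$. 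If $m=\infty$, then $g$ has infinite order and $\beta_{q(g)}$ has the (full) Rokhlin property, so $\gamma_g$ does too by Lemma \ref{trext}. If $2\le m<\infty$ and $g^m$ has infinite order in $N$, then $g$ has infinite order, $\alpha_g^{m}=\alpha_{g^m}$ has the full Rokhlin property, $\beta_{q(g)}$ has the order $m$ Rokhlin property, and the third item of Lemma \ref{trext} applies. In each of these cases $\gamma_g$ has the order $k(g)$ Rokhlin property.

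The remaining, and main, case is $2\le m<\infty$ with $g^m$ of finite order $d$ in $N$; here $m\mid k$ and $k:=k(g)=md<\infty$, and I would build the Rokhlin tower directly. Fix $\epsilon>0$ and a finite $F\subset A\otimes B$, which after a norm approximation we may take to consist of elementary tensors $a\otimes b$. Apply the order $m$ Rokhlin property of $\beta_{q(g)}$ to $\{b:a\otimes b\in F\}$ to get mutually orthogonal projections $f_0,\dots,f_{m-1}\in B$ with $\beta_{q(g)}(f_i)\approx f_{i+1\bmod m}$, $\sum_i f_i=1_B$, and $[f_i,b]$ small; apply the order $d$ Rokhlin property of $\alpha_{g^m}=\alpha_g^{m}$ to the enlarged finite set $\{\alpha_g^{-i}(a):0\le i<m,\ a\otimes b\in F\}$ to get mutually orthogonal projections $e_0,\dots,e_{d-1}\in A$ with $\alpha_g^{m}(e_j)\approx e_{j+1\bmod d}$, $\sum_j e_j=1_A$, and $[e_j,\alpha_g^{-i}(a)]$ small. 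Now put
$$P_{(i,j)}=\alpha_g^{i}(e_j)\otimes f_i\qquad(0\le i<m,\ 0\le j<d),$$
a family of $md=k$ projections in $A\otimes B$. Since the $e_j$ and the $f_i$ are each mutually orthogonal with sum $1$, the $P_{(i,j)}$ are mutually orthogonal (when $i\ne i'$ because $f_if_{i'}=0$, when $i=i'$ because $\alpha_g^{i}(e_je_{j'})=0$ for $j\ne j'$) and $\sum_{i,j}P_{(i,j)}=\sum_i\alpha_g^{i}(1_A)\otimes f_i=1$; also $[P_{(i,j)},a\otimes b]$ is small because $[\alpha_g^{i}(e_j),a]=\alpha_g^{i}([e_j,\alpha_g^{-i}(a)])$ and $[f_i,b]$ are small. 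Finally, from $\gamma_g(P_{(i,j)})=\alpha_g^{i+1}(e_j)\otimes\beta_{q(g)}(f_i)$ one checks $\gamma_g(P_{(i,j)})\approx P_{(i+1,j)}$ for $i<m-1$ and $\gamma_g(P_{(m-1,j)})\approx\alpha_g^{m}(e_j)\otimes f_0\approx e_{j+1\bmod d}\otimes f_0=P_{(0,\,j+1\bmod d)}$; relabelling $(i,j)\mapsto 1+i+jm$ turns $\{P_{(i,j)}\}$ into a single cyclic chain of length $k$ — this is exactly the translation $x\mapsto x+1$ of $\Z/k$ transported along the set bijection $\Z/k\cong\Z/m\times\Z/d$.

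Thus the only approximate conditions are the commutator estimates and the cyclic relations, and each holds with a tolerance that is linear in those of the two input Rokhlin towers, so demanding the latter to be $<\epsilon/2$ suffices; this produces the order $k$ Rokhlin data for $\gamma_g$ and completes the proof. The conceptual input is the identification of $\Z/k$ — viewed as a $\langle g\rangle$-set with $g$ acting by translation — with $\Z/m\times\Z/d$ equipped with the ``carry'' successor map, so that a height-$k$ tower is assembled from the height-$m$ tower for $\beta_{q(g)}$ and the height-$d$ tower for $\alpha_g^{m}$ by the twist $e_j\mapsto\alpha_g^{i}(e_j)$. The only real labour — and the main nuisance — is tracking how the tolerances and the finite sets propagate through the automorphisms $\alpha_g^{i}$, which is routine but lengthy, of the kind carried out in \cite{thesis}.
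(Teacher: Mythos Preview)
Your argument is correct and follows the same case analysis as the paper: split on whether $g\in N$, whether $q(g)$ has infinite order, and so on, then invoke Lemma~\ref{trext}. The paper dispatches \emph{all} cases by citing Lemma~\ref{rext}, including the finite--finite case (your fourth case $2\le m<\infty$, $g^m$ of finite order $d$), where it simply observes $g^m\in N$ so $\alpha_g^m$ has the Rokhlin property and declares Lemma~\ref{rext} applies.

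The one genuine difference is that in this finite--finite case you instead build the order-$md$ Rokhlin tower by hand, via the twisted product $P_{(i,j)}=\alpha_g^i(e_j)\otimes f_i$. This is a minor but worthwhile improvement in self-containment: the third item of Lemma~\ref{trext} as stated requires $\alpha^k$ to have the \emph{full} (infinite-order) Rokhlin property and concludes the full Rokhlin property for the tensor, whereas here $\alpha_g^m$ has only the order-$d$ property and the desired conclusion is order $md$. The paper is implicitly relying on an order-$d$ variant of that item, deferred to the thesis; your explicit construction is precisely what that variant's proof would be. So your route is the same in structure but unpacks the one step the paper black-boxes.
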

\begin{proof} Note that if $g\in N$, then $\alpha_g$ is an automorphism with the Rokhlin property and if $g\notin N$, then $(\beta\circ q)_g$ is an automorphism with the Rokhlin property. We will now try to apply Lemma \ref{rext} which involves considering the order of $\gamma_g$.
\bi
\item Suppose $\gamma_g$ has infinite order and $(\beta\circ q)_g$ has infinite order, then $g\notin N$ meaning that $(\beta\circ q)_g$ has the Rokhlin property and we are done by applying Lemma \ref{rext} with the roles of $A$ and $B$ reversed. \\
\item If $(\beta\circ q)_g$ has finite order $k$ and $\alpha_g$ has infinite order, then $g^k\in N$ and so $\alpha_g^k=\alpha_{g^k}$ has the Rokhlin property and $(\beta\circ q)_g$ has the Rokhlin property. This case is the content of Lemma \ref{rext}.\\
\item If $\gamma_g$ has finite order, then the orders of $\alpha_g$ and $(\beta\circ q)_g$ are also finite. Let $k$ be the order of $(\beta\circ q)_g$. Then once again, $g^k\in N$ and $\alpha_g^k$ has the Rokhlin property, so Lemma \ref{rext} applies.
\ei
\end{proof}

\begin{lem}\label{fext}
Suppose $G$ is a countable discrete group, $H$ is a subgroup of $G$ with finite index $k$ and let
$$N=\bigcap_{g\in G}gHg^{-1}.$$
Also suppose that $A$ is a UHF algebra and there is a product type action $\alpha_H$ of $H$ on $A$ with the Rokhlin property. Then there exists an action $\alpha_H^G$ of $G$ on $A\otimes M_{k^{\infty}}$ whose restriction to $N$ has the Rokhlin property.
\end{lem}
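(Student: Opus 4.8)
The plan is to build $\alpha_H^G$ by inducing the product type action $\alpha_H$ one tensor factor at a time. Fix a decomposition $A\cong M_{(n_l)_{l\in\N}}$ on which $\alpha_H$ is product type, say $\alpha_H=\bigotimes_{l=1}^{\infty}\Ad\rho_l$ for group homomorphisms $\rho_l\colon H\to U(M_{n_l})$. Fix a set of coset representatives for $H$ in $G$, and for each $l$ let $\rho_l^G\colon G\to U(M_{n_l k})$ be the unitary representation of $G$ induced from $\rho_l$. Set
$$\alpha_H^G=\bigotimes_{l=1}^{\infty}\Ad\rho_l^G,$$
which is a product type action of $G$ on $\bigotimes_{l=1}^{\infty}M_{n_l k}$.

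Second, I would identify the underlying algebra with $A\otimes M_{k^{\infty}}$. Since $M_{n_l k}\cong M_{n_l}\otimes M_k$, reordering the resulting double infinite tensor product by Lemma \ref{regroup} gives
$$\bigotimes_{l=1}^{\infty}M_{n_l k}\;\cong\;\Big(\bigotimes_{l=1}^{\infty}M_{n_l}\Big)\otimes\Big(\bigotimes_{l=1}^{\infty}M_k\Big)\;=\;A\otimes M_{k^{\infty}},$$
so $\alpha_H^G$ transports to a (still product type) action of $G$ on $A\otimes M_{k^{\infty}}$.

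Third, I would analyse the restriction to $N$. By Lemma \ref{induction}, for each $l$ the representation $\rho_l^G|_N$ is unitarily equivalent to $\rho_l|_N\otimes\id_{M_k}$, so we may choose $w_l\in U(M_{n_l k})$ with $\rho_l^G(h)=w_l(\rho_l(h)\otimes 1_k)w_l^{*}$ for all $h\in N$. Consequently, under the identification of the previous paragraph, $\alpha_H^G|_N$ is conjugate, via the automorphism $\bigotimes_{l}\Ad w_l$, to $(\alpha_H|_N)\otimes\id_{M_{k^{\infty}}}$. Now $\alpha_H$ has the Rokhlin property and $N$ is a subgroup of $H$, so $\alpha_H|_N$ has the Rokhlin property as an action of $N$; by Lemma \ref{trext} (applying the first bullet to those $h\in N$ of infinite order and the second to those of finite order) the action $(\alpha_H|_N)\otimes\id_{M_{k^{\infty}}}$ again has the Rokhlin property; and this property is unchanged by conjugation by an automorphism, since the Rokhlin projections can be carried through the automorphism. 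Hence $\alpha_H^G|_N$ has the Rokhlin property.

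The only substantive point is the structural input from Lemma \ref{induction}: restricting an induced representation to the normal core $N$ splits it as the original representation tensored with a multiplicity, and this is exactly what makes $\alpha_H^G|_N$ decouple as $(\alpha_H|_N)\otimes\id$. The remaining steps are routine bookkeeping with Lemma \ref{regroup} and the tensorial permanence of the Rokhlin property (Lemma \ref{trext}).
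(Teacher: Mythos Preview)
Your proposal is correct and follows essentially the same route as the paper: induce each tensor factor via Lemma \ref{induction}, identify the resulting algebra with $A\otimes M_{k^\infty}$, and use Lemma \ref{rext} to pass the Rokhlin property from $\alpha_H|_N$ to $(\alpha_H|_N)\otimes\id_{M_{k^\infty}}$. You have simply spelled out the bookkeeping (the $w_l$'s, the pointwise application of Lemma \ref{trext}, and the invariance under conjugation) that the paper's two-line proof leaves implicit.
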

\begin{proof}  Apply Lemma \ref{induction} to each tensor factor of $\alpha_H$ to get an action $\alpha_H^G$ of $G$ on $A\otimes M_{k^{\infty}}$ such that $\alpha_H^G|_N$ is conjugate to $\alpha_H|_N\otimes \id_{M_{k^\infty}}$, which has the Rokhlin property by Lemma \ref{rext}.
\ep

\begin{thm}\label{index} Suppose $G$ is a countable discrete group with a subgroup $H$ of finite index and normal subgroup of index $k$ given by
$$N=\bigcap_{g\in G}gHg^{-1}.$$ 
Suppose also that $A$ is any UHF algebra. If $H$ has a product type action on $A$ with the Rokhlin property, then there is a product type action of $G$ on $A\otimes M_{k^{\infty}}$ with the Rokhlin property.
\end{thm}
\bp
Let $q$ be the quotient map by $N$. By assumption, there exists a product type action $\alpha_H$ of $H$ on $A$ has the Rokhlin property. Let $j$ be the index of $H$ which divides $k$. We then have by Proposition \ref{fext} an action $\alpha_H^G$
of $G$ on $A\otimes M_{j^{\infty}}$ whose restriction to $N$ has the Rokhlin property. Now since $G/N$ is a finite group of size $k$, Example \ref{finite} shows there exists a product type action $\beta$ of $G/N$ on $M_{k^{\infty}}$ with the Rokhlin property. Combine these using Proposition \ref{ext} to get an action
$\alpha_H^G\otimes (\beta\circ q)$ of $G$ on $A\otimes M_{k^{\infty}}$ with the Rokhlin property.
\end{proof}

Here we see the cut-down principle in action.
\bl\label{onefg}Suppose $G$ is a countable discrete abelian group and let $g\in G$ with finite order $k$. Then there is a map 
$$\alpha[g]: G{\hookrightarrow}\prod_{l=1}^{\infty}U(M_{k}){\hookrightarrow}\Aut M_{k^{\infty}}$$
such that $\alpha[g]_g$ has the order $k$ Rokhlin property. 
\el
\bp 
By Corollary \ref{abelian} there is a strongly outer action $\alpha$ of $G$ on $\Qq$. Hence $\alpha_g$ has the order $k$ tracial Rokhlin property. Let $(N_l)_{l\in\N}$ define a regrouping with respect to $\alpha_g$ as in Lemma \ref{cutdown}. Now since $G$ is abelian, we can diagonalise the image of $G$ in $U(M_{N_l})$ and assume the first $k$ entries correspond to $\beta$ from Lemma \ref{cutdown}. Then restricting to those entries for every $l\in\N$ gives the required action on $M_{k^\infty}$.
\ep
\bt\label{RokQ} Let $\Qq$ be the universal UHF algebra and let $G$ be any countable discrete almost abelian group. Then there exists a product type action
$$G\hookrightarrow\prod_{n=1}^{\infty} U(M_{n})\to\Aut \Qq$$
of $G$ on $\Qq$ with the Rokhlin property.
\et
\bp By Theorem \ref{index} it suffices to assume that $G$ is abelian. In this case combining Corollary \ref{abelian} with Lemma \ref{onefg} we have for each $g\in G$ of finite order $k(g)$, an action $\alpha[g]$ of $G$ on $M_{k(g)^{\infty}}$ such that $\alpha[g]_g$ has the order $k(g)$ Rokhlin property. For $g\in G$ with infinite order, let $\alpha[g]$ be any strongly outer product type action given by Lemma \ref{abelian} and $\alpha[g]_g$ will have the Rokhlin property by Kishimoto \cite[Theorem 1.4]{KishZ}. Taking the tensor product of these over $g\in G$ we get a product type action $\alpha$ of $G$ on $\Qq$ with the Rokhlin property. Finally apply Lemma \ref{regroup} to get a product type action of the form required.
\ep
The above can be made to depend on only one of Sections 3 and 4.

\section{The crossed products $A\rtimes_{\alpha} G$}

\begin{thm}\label{cptrz}Suppose $G$ is a countable discrete maximally almost periodic elementary amenable group, $A$ is any UHF algebra and $\alpha$ is a strongly outer product type action of $G$ on $A$. Then $A\rtimes_{\alpha}G$ is unital simple separable nuclear with tracial rank zero and satisfies the Universal Coefficient Theorem (UCT). Moreover, $A\rtimes_{\alpha}G$ has a unique tracial state.
\end{thm}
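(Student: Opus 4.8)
The plan is to assemble the statement from the structural results already established, with a single external classification-type input for the tracial rank zero assertion. Unitality and separability are immediate: $A$ is unital and separable and $G$ is countable, so $A\rtimes_{\alpha}G$ is unital and separable. Since $\alpha$ is strongly outer it is in particular outer, and since $G$ is elementary amenable it is amenable; hence Proposition \ref{ussn}(i) shows $A\rtimes_{\alpha}G$ is simple and nuclear, and Proposition \ref{ussn}(ii) shows it has a unique tracial state. For the UCT, note that $G$ is countable discrete amenable and maximally almost periodic and $\alpha$ is a product type action, so Corollary \ref{cpqduct} applies and gives that $A\rtimes_{\alpha}G$ is nuclear, quasidiagonal, and satisfies the UCT.

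It then remains to deduce tracial rank zero, and here the first step is to invoke Proposition \ref{ussn}(iii): because $G$ is elementary amenable and $\alpha$ is strongly outer, $A\rtimes_{\alpha}G$ is $\Zz$-stable and has real rank zero. At this point $A\rtimes_{\alpha}G$ is a unital simple separable nuclear $\Zz$-stable $C^*$-algebra that is quasidiagonal, has real rank zero, has a unique tracial state, and satisfies the UCT. From these hypotheses one concludes $\mathrm{TR}(A\rtimes_{\alpha}G)=0$ by the known structure theory: $\Zz$-stability forces stable rank one and weak unperforation of $K_0$, quasidiagonality of the unique trace together with nuclearity and $\Zz$-stability controls the decomposition rank, and real rank zero upgrades this to tracial rank zero (this is the combination of results of Winter and Lin as used by Matui--Sato, e.g.\ in the $\Zz$-stable/real-rank-zero case).

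The only genuinely substantial ingredient is this last implication ``unital simple separable nuclear $\Zz$-stable, with real rank zero and quasidiagonal, has tracial rank zero'', which is not proved in this paper but quoted from the classification literature; care is needed only in citing the correct theorem and checking its hypotheses against what Proposition \ref{ussn} and Corollary \ref{cpqduct} provide. Everything else — verifying that each cited proposition's hypotheses (outerness, strong outerness, amenability, elementary amenability, maximal almost periodicity, product type) genuinely hold in the present situation — is routine bookkeeping, so I would present this theorem as a short corollary of Proposition \ref{ussn} and Corollary \ref{cpqduct} plus one external structural result.
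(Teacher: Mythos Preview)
Your proposal is correct and follows essentially the same route as the paper: both assemble simplicity, nuclearity, unique trace, $\Zz$-stability, and real rank zero from Proposition \ref{ussn}, obtain the UCT and quasidiagonality from Corollary \ref{cpqduct}, and then deduce tracial rank zero via the chain ``quasidiagonal $\Rightarrow$ finite decomposition rank (Matui--Sato \cite{MS3}) $\Rightarrow$ tracial rank zero in the presence of real rank zero (Winter \cite{Wintr0})''. The paper is simply more explicit about the two external citations, naming \cite[Corollary~1.2]{MS3} and \cite[Theorem~2.1]{Wintr0} directly rather than describing them.
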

\begin{proof} By Proposition \ref{ussn} (i)-(iii), $A\rtimes_{\alpha}G$ is simple nuclear  $\Zz$-stable of real rank zero and has a unique tracial state. The UCT is from Corollary \ref{cpqduct}. By \cite[Theorem 2.1]{Wintr0} it just suffices to show $A\rtimes_{\alpha}G$ has finite decomposition rank. Now by \cite[Corollary 1.2]{MS3}, it suffices to show that $A\rtimes_{\alpha}G$ is quasidiagonal, which is Corollary \ref{cpqduct}. \ep
\begin{cor}Suppose $G$ is a countable discrete almost abelian group and $\alpha$ is a strongly outer product type action of $G$ on the universal UHF algebra $\Qq$. Then $\Qq\rtimes_{\alpha}G$ is unital simple separable nuclear with tracial rank zero and satisfies the Universal Coefficient Theorem. Moreover, $\Qq\rtimes_{\alpha}G$ has a unique tracial state and is locally type I.
\end{cor}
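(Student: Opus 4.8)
The plan is to reduce the corollary to Theorem~\ref{cptrz} together with the last assertion of Corollary~\ref{cpqduct}. Since $G$ is almost abelian, Lemma~\ref{almostn} already tells us that $G$ is elementary amenable (in particular amenable). So the only hypothesis of Theorem~\ref{cptrz} that is not immediately available is that $G$ is maximally almost periodic, and establishing this is the one point that needs an argument.

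To see that $G$ is maximally almost periodic, I would fix, using Lemma~\ref{almostn}, an abelian normal subgroup $N\le G$ of finite index $k=[G:N]$, and treat $g\in G\setminus\{1\}$ in two cases. If $g\notin N$, then the image of $g$ under the quotient map $G\to G/N$ onto the finite group $G/N$ is non-trivial, and every finite group embeds into some $U(M_m)$. If $g\in N\setminus\{1\}$, then, $N$ being abelian and countable, there is a character $\varphi\colon N\to U(1)=U(M_1)$ with $\varphi(g)\neq 1$; inducing $\varphi$ up to $G$ gives a unitary representation $\rho=\varphi_N^G$ of $G$ on $\C^{k}$, and by Lemma~\ref{induction} (applied with $H=N$, which is already normal, so that the $N$ of that lemma is again $N$) we have $\rho|_N=\varphi|_N\otimes\id_{M_k}$, whence $\rho(g)=\varphi(g)\,1_{M_k}\neq 1$. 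Thus $G$ is maximally almost periodic. Alternatively, one may observe that $G$ is automatically maximally almost periodic because it carries a strongly outer, hence outer, product type action: writing $\alpha_g=\Ad\bigl(\bigotimes_l u_l^{(g)}\bigr)$, outerness forces some $u_l^{(g)}$ to be non-scalar, and composing $h\mapsto u_l^{(h)}$ with the adjoint representation $U(M_{n_l})\to U(M_{n_l^2})$, $u\mapsto u\otimes\bar u$, produces a finite-dimensional representation of $G$ separating $g$ from $1$.

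With $G$ now known to be a countable discrete maximally almost periodic elementary amenable group, Theorem~\ref{cptrz} applies verbatim to $\alpha$ and yields that $\Qq\rtimes_\alpha G$ is unital, simple, separable, nuclear, of tracial rank zero, satisfies the UCT, and has a unique tracial state. It remains only to note that $\Qq\rtimes_\alpha G$ is locally type~I; but this is precisely the final assertion of Corollary~\ref{cpqduct}, which applies here because $G$ is almost abelian (hence amenable and maximally almost periodic) and $\alpha$ is of product type.

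The main, and essentially only, obstacle is the verification that $G$ is maximally almost periodic, since this does not appear among the stated hypotheses; everything else is a direct citation. I expect the induced-representation argument above, resting on Lemma~\ref{induction}, to dispatch it cleanly, after which the corollary follows immediately.
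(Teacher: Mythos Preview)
Your proposal is correct and follows essentially the same route as the paper: deduce elementary amenability from Lemma~\ref{almostn}, verify maximal almost periodicity, invoke Theorem~\ref{cptrz}, and then cite Corollary~\ref{cpqduct} for the locally type~I conclusion. Your ``alternative'' observation---that outerness of the product type action itself forces $G$ to be maximally almost periodic---is in fact the paper's one-line argument; your primary induced-representation argument via Lemma~\ref{induction} is a valid independent route, though note that once some $u_l^{(g)}$ is non-scalar it is already $\neq 1$, so the extra pass through $u\mapsto u\otimes\bar u$ is not needed.
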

\bp By Lemma \ref{almostn}, $G$ is elementary amenable. By our construction of product type actions we see they are also maximally almost periodic. Hence Theorem \ref{cptrz} applies. For locally type I combine Theorem \ref{productcp} with Corollary \ref{cpqduct}.
\ep

\end{document}